\newtheorem*{theorem*}{Theorem}
\newtheorem{theorem}{Theorem}
\newtheorem*{lemma*}{Lemma}
\newtheorem{lemma}[theorem]{Lemma}
\newtheorem*{proposition*}{Proposition}
\newtheorem{proposition}[theorem]{Proposition}
\newtheorem*{fact*}{Fact}
\newtheorem*{question*}{Question}
\newtheorem*{corollary*}{Corollary}
\newtheorem{corollary}[theorem]{Corollary}
\numberwithin{claimcounter}{theorem}
\newtheorem*{claim*}{Claim}
\theoremstyle{remark}
\newtheorem*{remark*}{Remark}
\theoremstyle{definition}
\newtheorem*{definition*}{Definition}
\numberwithin{theorem}{section}
\newtheorem*{observation*}{Observation}
\title{Weak saturation of multipartite hypergraphs}
\author{Denys Bulavka\thanks{Department of Applied Mathematics, Faculty of Mathematics and
		Physics, Charles University, Prague,
		Czech Republic.}
	\thanks{Supported by
    Charles University project PRIMUS/21/SCI/014, Grant Schemes at CU,
    reg.~no. CZ.02.2.69/0.0/0.0/19\_073/0016935 and by the grant SVV–2020–260578. Email:
    \mbox{dbulavka$@$kam.mff.cuni.cz}.} , Martin Tancer$^*$\thanks{Supported by
      GA\v{C}R grant 22-19073S. Email: \mbox{`MySurname'@kam.mff.cuni.cz}.} , and
    Mykhaylo Tyomkyn$^*$\thanks{Supported by
    GA\v{C}R grant 22-19073S, ERC Synergy Grant DYNASNET 810115 and the
    H2020-MSCA-RISE Project CoSP-GA No. 823748. Email: \mbox{tyomkyn$@$`SameAsMyCoauthors'}.}}
\date{}
\newcommand{\R}{\mathbb{R}}
\DeclareMathOperator{\sgn}{sgn}
\DeclareMathOperator{\supp}{supp}
\DeclareMathOperator{\SPAN}{span}
\DeclareMathOperator{\wsat}{wsat}
\DeclareMathOperator{\mwsat}{w}
\DeclareMathOperator{\rank}{rank}
\newcommand{\rvec}{{\mathbf r}}
\newcommand{\nvec}{{\mathbf n}}
\newcommand{\lip}{{\llcorner}}
\begin{document}

\maketitle

\begin{abstract}
Given $q$-uniform hypergraphs ($q$-graphs) $F,G$ and $H$, where $G$ is a spanning subgraph of $F$, $G$ is called \emph{weakly $H$-saturated} in $F$ if the edges in $E(F)\setminus E(G)$ admit an ordering $e_1,\dots, e_k$ so that for all $i\in [k]$ the hypergraph $G\cup
\{e_1,\dots,e_i\}$ contains an isomorphic copy of $H$ which in turn contains the edge $e_i$. The \emph{weak saturation number} of $H$ in $F$ is the smallest size of an $H$-weakly saturated subgraph of $F$. Weak saturation was introduced by Bollob\'as in 1968, but despite decades of study our understanding of it is still limited. The main difficulty lies in proving lower bounds on weak saturation numbers, which typically withstands combinatorial methods and requires arguments of algebraic or geometrical nature.

In our main contribution in this paper we determine \emph{exactly} the weak saturation number of complete multipartite $q$-graphs in the directed setting, for any choice of parameters. This generalizes a theorem of Alon from 1985. Our proof combines the exterior algebra approach from the works of Kalai with the use of the colorful exterior algebra motivated by the recent work of Bulavka, Goodarzi and Tancer on the colorful fractional Helly theorem. In our second contribution answering a question of Kronenberg, Martins and Morrison, we establish a link between weak saturation numbers of bipartite graphs in the clique versus in a complete bipartite host graph. In a similar fashion we asymptotically determine the weak saturation number of any complete $q$-partite $q$-graph in the clique, generalizing another result of Kronenberg et al.
\end{abstract}	

\section{Introduction}

Let $F$ and $H$ be $q$-uniform hypergraphs ($q$-graphs for short); we identify
hypergraphs with their edge sets. We say that a subgraph $G\subseteq F$ is
\emph{weakly $H$-saturated} in $F$ if the edges of $F\setminus G$ can be ordered
as $e_1,\dots,e_k$ such that for all $i\in [k]$ the hypergraph $G\cup
\{e_1,\dots,e_i\}$ contains an isomorphic copy of $H$ which in turn contains the
edge $e_i$. We call such $e_1,\dots, e_k$ an \emph{$H$-saturating sequence} of $G$ in
$F$. The \emph{weak saturation number} of $H$ in $F$, $\wsat(F,H)$ is the
minimum number of edges in a weakly $H$-saturated subgraph of $F$. When $F$ is
complete of order $n$, we simply write $\wsat(n,H)$.

Weak saturation was introduced by Bollob\'as~\cite{Bollobas} in 1968 and is
related to (strong) graph saturation: $G$ is $H$-saturated in $F$ if adding any
edge of $F\setminus G$ would create a new copy of $H$. However, a number of
properties of weak saturation make it a more natural object of study. Firstly,
it follows from the definition that any graph $G$ achieving $\wsat(F,H)$ has to
be $H$-free (we could otherwise remove an edge from a copy of $H$ in $G$
resulting in a smaller example), while for strong saturation $H$-freeness may or
may not be imposed, resulting in two competing notions (see~\cite{MoshkShap} for
a discussion). Secondly, a short subadditivity argument originally due to
Alon~\cite{Alon} shows that for every 2-uniform $H$,   $\lim_{n\rightarrow \infty}\wsat(n,H)/n$ exists. Whether the same holds for strong saturation is a longstanding conjecture of Tuza~\cite{Tuza2}. And thirdly, weak saturation lends itself to be studied via algebraic methods, thus offering insight into algebraic and matroid structures underlying graphs and hypergraphs.
 
The most natural case when $F$ and $H$ are cliques was the first to be studied.
Let $K_r^q$ denote the complete $q$-graph of order $r$.  
%Confirming a
%conjecture of Bollob\'as, Lov\'asz~\cite{Lovasz77} proved that
%$\wsat(n,K_r^{q})=\binom{n}{q}-\binom{n-r+q}{q}$ and independent proofs were
%given later by Alon~\cite{Alon}, Frankl~\cite{Frankl82}, and Kalai~\cite{Kal84,
%Kal85}. 
Confirming a conjecture of Bollob\'as, Frankl~\cite{Frankl82}, and
Kalai~\cite{Kal84, Kal85} independently proved that
$\wsat(n,K_r^{q})=\binom{n}{q}-\binom{n-r+q}{q}$. Another proof has been given
by Alon~\cite{Alon} and in hindsight this conjecture could be also derived from
an earlier paper of~Lov\'asz~\cite{Lovasz77}.
While the upper bound is a construction that is easy to guess (a common
feature in weak saturation problems), all of the above lower bound proofs rely
on algebraic or geometric methods, and no purely combinatorial proof is known
to this date. 

In the subsequent years weak saturation has been studied extensively~\cite{Alon,
  Tuza, Erdos91, Pikhurko01a, Tuza88, MoshkShap, Pikhurko01b, Semanivsin97,
  Borowiecki02, Sidorowicz07, Faudree14, BBMR, Balogh98, Morrison18}. Despite this, our understanding of weak saturation numbers is still rather limited. For instance we do not know whether for $q\geq 3$ we have a similar limiting behavior as in the graph case, in that $\lim_{n\rightarrow \infty}\wsat(n,H)/n^{q-1}$ always exists; this has been conjectured by Tuza~\cite{Tuza}.

In this paper we address the case when $H=K^q_{r_1,\dots,r_d}$ is a complete
$d$-partite $q$-graph for arbitrary $d\geq q > 1$. That is, $V(H)$ is a disjoint union of sets $R_1,\dots,R_d$ with $|R_i|=r_i$ and 
$$E(H)=\left\{e\in \binom{V(H)}{q}\colon |e\cap R_i|\leq 1 \text{ for all } i
\in [d]\right\},$$ 
in particular, for $q=2$ we recover the usual complete multipartite graphs. This is perhaps the next most natural class of hypergraphs to consider after the cliques.

For the host graph $F$, besides the clique it is natural to consider a larger complete $d$-partite $q$-graph $K^{q}_{n_1,\dots, n_d}$. In the latter case we have a choice between the \emph{undirected} and \emph{directed} versions of the problem. The former follows the definition of weak saturation given at the beginning, while in the latter we additionally impose that the new copies of $H$ in $F$ created in every step ``point the same way'', i.e. have $r_i$ vertices in the $i$-th partition class for all $i\in[d]$ (see below for a formal definition).

All three above versions have been studied in the past. For $q=2$,
Kalai~\cite{Kal85} determined $\wsat(n,K_{r,r})$ for large enough $n$. Kronenberg, Martins and
Morrison~\cite{KMM} recently extended it to
$\wsat(n,K_{r,r-1})$ and asymptotically to all $\wsat(n,K_{s,t})$. No other
values $\wsat(n,K^{q}_{r_1,\dots, r_d})$ are known except for $r_1=\dots=r_d=1$
when $H$ is a clique and a handful of closely related cases, e.g., when all
$r_i$ but one are $1$~\cite{Pikhurko01b}.  When both $H$ and $F$ are complete
$d$-partite, for $d=q$ Alon~\cite{Alon} solved the problem in the directed
setting. Moshkovitz and Shapira~\cite{MoshkShap}, building on Alon's work,
settled the undirected case, determining $\wsat(K^d_{n_1,\dots ,n_d},
K^d_{r_1,\dots ,r_d})$. There has been no progress for $d>q$.

In our main contribution in this paper we settle completely the directed case for all $q$ and $d$. To state the problem formally, let $\rvec=(r_1,\dots,r_d)$ and $\nvec = (n_1,\dots, n_d)$
be integer vectors such that $1\leq r_i\leq n_i$. Suppose $N=N_1\sqcup \dots \sqcup N_d$ where $|N_i|=n_i$ and $\sqcup$ denotes a disjoint union. 
%\martin{To be honest, my personal preference would be to use the notation
%$K^q_{\nvec}$ instead of $K^q[N]$ just saying that $N$ is the (fixed) vertex
%set of $K^q_{\nvec}$. The reasons why I would propose this change are the
%following: (i) It would be more consistent with $K^q_{\rvec}$. (ii) In the
%statement of Theorem~\ref{t:mwsat} it is not important how do we choose $N$ but
%it is only important how do we choose the vector $\nvec$. This it would fit
%better into the statement in my opinion (which in particular does not quantify
%$N$ but $\nvec$ and $\rvec$). (iii) My viewpoint on induced subgraphs is that
%if $G$ is a graph and $M$ is a subset of vertices of $G$, then $G[M]$ is the
%subgraph of $G$ induced by $M$. Thus, if we later on need an induced subgraph
%of $K^q_{\nvec}$, I would rather denote it $K^q_{\nvec}[M]$ because we do not
%have just a graph $K^q$. This is slightly more complicated notation than
%$K^q[M]$. On the other hand, $K^q_{\nvec}$ is simpler than $K^q[N]$.}
Let $K^q_{\nvec}$ be the
complete $d$-partite $q$-graph on $N$ whose partition classes are the $N_i$, and let 
$K^q_{\rvec}$ be an unspecified complete $d$-partite $q$-graph on the same partition classes, with $r_i$ vertices in each $N_i$. 
Given a subgraph $G$ of $K^q_{\nvec}$, a sequence of edges $e_1,\dots
,e_k$ in $K^q_{\nvec}$ is a \emph{(directed) $K^q_{\rvec}$-saturating sequence of $G$ in
	$K^q_{\nvec}$} if: (i) $K^q_{\nvec}\setminus G = \{e_1,\dots ,e_k\}$; (ii) for every
$j\in[k]$ there exists $H_j \subseteq G \cup \{e_1,\dots, e_j\}$ isomorphic to 
$K^q_{\rvec}$ such that $e_j \in H_j$ and $|
V(H_j)\cap N_i| = r_i$ for all $i\in[d]$. The $q$-graph $G$ is said to be \emph{(directed) weakly
	$K^q_{\rvec}$-saturated in $K^q_{\nvec}$} if it admits a $K^q_{\rvec}$-saturating sequence in the latter.
The \emph{(directed) weak saturation number of
	$K^q_{\rvec}$ in $K^q_{\nvec}$}, in notation $\mwsat(K^q_{\nvec}, K^q_{\rvec})$, is the minimal number of edges in a weakly $K^q_{\rvec}$-saturated subgraph of $K^q_{\nvec}$.
\begin{theorem}\label{t:mwsat} 
	For all $d\geq q\geq 2$, $\nvec$ and $\rvec$ we have
	$$\mwsat (K^q_{\nvec}, K^q_{\rvec}) = \sum_{I\in
		\binom{[d]}{q}}\prod_{i\in I}n_i - \sum_{I\in
		\binom{[d]}{\leq q}} \prod_{i\in I}(n_i-r_i) .$$ 
\end{theorem}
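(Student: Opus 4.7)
The upper bound follows from an explicit construction generalizing Alon's. For each $i\in[d]$ fix a subset $A_i\subseteq N_i$ of size $n_i-r_i$ and a distinguished vertex $c_i\in N_i\setminus A_i$, and fix a total order on $[d]$. For each pair $(J,\xx)$ with $J\subseteq[d]$, $|J|\leq q$, and $\xx=(x_j)_{j\in J}\in\prod_{j\in J}A_j$, define the edge $e(J,\xx)\in K^q_{\nvec}$ whose parts are $J$ together with the $q-|J|$ smallest elements $L(J)$ of $[d]\setminus J$, taking vertices $x_j$ on $J$ and $c_i$ on $L(J)$. The map $(J,\xx)\mapsto e(J,\xx)$ is injective---one recovers $J$ as the set of parts of the resulting edge whose vertex lies in the corresponding $A_i$---so its image $M$ has size $\sum_{J\in\binom{[d]}{\leq q}}\prod_{j\in J}(n_j-r_j)$, matching the deficit term. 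Setting $G=K^q_{\nvec}\setminus M$ and processing $M$ in order of increasing $|J|$, one saturates the edge $e(J,\xx)$ via the copy of $K^q_{\rvec}$ whose $i$-th part is $N_i\setminus A_i$ when $i\notin J$ and $((N_i\setminus A_i)\setminus\{c_i\})\cup\{x_i\}$ when $i\in J$; a short check shows that the only other $M$-edges in this copy correspond to proper subsets $J'\subsetneq J$, and so have already been processed.

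\textbf{Lower bound.} The lower bound is the heart of the argument, and I would prove it by adapting Kalai's algebraic approach to the colorful exterior algebra setting of Bulavka--Goodarzi--Tancer. The plan is to assign each edge $e\in K^q_{\nvec}$ a vector $\Phi(e)$ in a graded vector space $W$ so that for every directed copy $H\subseteq K^q_{\nvec}$ of $K^q_{\rvec}$ the vectors $\{\Phi(e):e\in H\}$ satisfy a linear relation with every coefficient nonzero. The standard closure argument for weak saturation then gives $\SPAN\{\Phi(e):e\in G\}=\SPAN\{\Phi(e):e\in K^q_{\nvec}\}$ for every weakly $K^q_{\rvec}$-saturated $G$, whence $\mwsat(K^q_{\nvec},K^q_{\rvec})\geq\rank\Phi$. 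Concretely, I would place the vertices of each $N_i$ as vectors $u_v$ in general position in a space $V_i$ of dimension $r_i$, and for an edge $e=(v_i)_{i\in I}$ form $\Phi(e)$ as a colorful wedge of the $u_{v_i}$ inside a piece of $W$ built from the $V_i$ and graded by subsets of $[d]$. The required relation on a copy of $K^q_{\rvec}$ with supports $(S_i)$ then descends from the unique (up to scaling) linear dependence on $\{u_v:v\in S_i\}\subseteq V_i$ in each part, extended across the parts by multilinearity.

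\textbf{Main obstacle.} The crux is to compute $\rank\Phi$ exactly. A decoupled construction treating each part-signature $I\in\binom{[d]}{q}$ separately reduces to Alon's $d=q$ theorem $q$-subset-by-$q$-subset, giving only the weaker bound $\sum_{I}[\prod_{i\in I}n_i-\prod_{i\in I}(n_i-r_i+1)]$; the shortfall compared to the true formula is precisely the over-counting factor $\binom{d-|J|}{q-|J|}$ for each partial support $J\subseteq[d]$ with $|J|\leq q$. The colorful exterior algebra is designed to couple the signatures so that the closure relation coming from a single copy of $K^q_{\rvec}$ becomes one relation in $W$ rather than $\binom{d}{q}$ independent per-signature relations; this reduces $\dim\ker\Phi$ from $\sum_J\binom{d-|J|}{q-|J|}\prod_{j\in J}(n_j-r_j)$ to the desired $\sum_J\prod_{j\in J}(n_j-r_j)$, matching the upper bound. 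Showing this rigorously---that the kernel of $\Phi$ is cut out by exactly the colorful closure relations and no more---is the bulk of the technical work, and should proceed by induction on $|J|$ exploiting the generic position of the $u_v$ together with the colorful multiplication rules.
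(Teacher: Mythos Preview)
Your upper bound is correct and is essentially the paper's construction: your $N_i\setminus A_i$ is the paper's $R_i$, and your $e(J,\xx)$ is a specific choice of the paper's injection $\lambda$ (the paper allows any edge of $K^q_{\nvec}[R\cup S]$ containing $S$, and proves saturation for every such choice). Your saturation order by increasing $|J|$ matches the paper's order by increasing $|L\setminus R|$.

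For the lower bound you have correctly diagnosed the landscape---the per-signature/decoupled approach undercounts, and some coupling across the $\binom{d}{q}$ part-signatures is needed to hit the exact rank---but what you have written is a plan, not a proof, and the plan contains a genuine gap. First, a slip: with $\dim V_i=r_i$ and vectors in general position, $r_i$ vectors in $V_i$ are \emph{independent}, so there is no ``unique linear dependence on $\{u_v:v\in S_i\}$'' to extend; presumably you intend $\dim V_i=r_i-1$. More substantively, you never define $W$ or $\Phi$, and your heuristic that ``one relation per copy of $K^q_{\rvec}$ instead of $\binom{d}{q}$'' forces $\dim\ker\Phi=\sum_{J}\prod_{j\in J}(n_j-r_j)$ is not an argument: there are vastly more copies than that, and you give no mechanism for identifying a spanning set of relations of exactly that size, nor for proving that no further relations sneak in. Your proposed ``induction on $|J|$'' is not fleshed out and is not how the paper proceeds.

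The paper's route is rather different in its mechanics. It works in the full exterior algebra $\bigwedge \R^N$ with a \emph{colorful generic orthonormal} basis $(f_v)$, fixes $J_i\subseteq N_i$ of size $r_i-1$ and $w_i\in N_i\setminus J_i$, sets $s=d-q$ and $g=\sum_{T\in\binom{W}{s}}f_T$, and defines the kernel subspace explicitly as
\[
U=\SPAN\{\,g\lip f_T:\ T\in E(K^d[N\setminus J]),\ |T\cap W|\geq s\,\},
\]
where $\lip$ is the left interior product. The size bound $\dim U\leq \sum_{J}\prod_{j\in J}(n_j-r_j)$ is then immediate from the indexing set, and the hard step---for every admissible $R$ producing an element of $U$ supported exactly on $E(K^q_{\nvec}[R])$---is done by the single explicit formula $m=(g\wedge f_J)\lip e_R$ together with a direct coefficient computation using the colorful factorisation of $\lip$ (their Proposition~\ref{p:lip_colorful}). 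There is no induction on $|J|$; the coupling across signatures is achieved in one stroke by the operator $g\lip(\cdot)$, which is the idea your proposal is missing.
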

\noindent
In the above formula 
$\binom{[d]}{\leq q}$ stands for the
set of all subsets of $[d]$ of size at most $q$, and we use the convention that $\prod_{i\in \emptyset}(n_i - r_i) = 1$.  

As mentioned, the $d=q$ case of Theorem~\ref{t:mwsat} was proved by Alon~\cite{Alon}. Hence our result generalizes Alon's theorem to arbitrary $d\geq q$. When $H$ is balanced, that is when $r_1=\dots = r_d$, there is no difference between the directed and undirected partite settings. Writing $K^{q}(r;d)$ for $K^{q}_{r,\dots, r}$ ($d$ times), Theorem~\ref{t:mwsat} thus determines the weak saturation number of $K^{q}(r;d)$ in complete $d$-partite $q$-graphs.
\begin{corollary}\label{c:unordered}
	For all $d\geq q\geq 2$ and $n_1, \dots, n_d \geq r\geq 1$ we have
	$$\wsat(K^{q}_{n_1,\dots, n_d}, K^{q}(r;d))= \sum_{I\in
		\binom{[d]}{q}}\prod_{i\in I}n_i - \sum_{I\in
		\binom{[d]}{\leq q}} \prod_{i\in I}(n_i-r) .
	$$
\end{corollary}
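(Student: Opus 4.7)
The plan is to derive Corollary~\ref{c:unordered} directly from Theorem~\ref{t:mwsat} by showing that, in the balanced setting $r_1 = \ldots = r_d = r$, the directed and undirected notions of weak saturation coincide. The inequality $\wsat(K^q_{n_1,\ldots,n_d}, K^q(r;d)) \leq \mwsat(K^q_{n_1,\ldots,n_d}, K^q(r;d))$ is immediate, since every directed saturating sequence is in particular an undirected one. For the matching lower bound, the key step is the following structural lemma: every subgraph $H_0 \subseteq K^q_{n_1,\ldots,n_d}$ isomorphic to $K^q(r;d)$ satisfies $|V(H_0)\cap N_i| = r$ for every $i \in [d]$.

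To establish the lemma, take the $d$-partition $V(H_0) = R_1 \sqcup \ldots \sqcup R_d$ witnessing $H_0 \cong K^q(r;d)$, and define the \emph{host-colour} map $\tau : V(H_0) \to [d]$ by $\tau(v) = i$ iff $v \in N_i$. The main observation is that for any $k \neq k'$ and any $v \in R_k$, $w \in R_{k'}$, there exists an edge of $H_0$ containing both $v$ and $w$: one takes $\{v, w, u_3, \ldots, u_q\}$ with one $u_\ell$ from each of $q-2$ further parts of $H_0$, which is possible precisely because $d \geq q$. Since this edge also lies in the host $K^q_{n_1,\ldots,n_d}$, its $q$ vertices must occupy $q$ distinct classes $N_i$, and in particular $\tau(v) \neq \tau(w)$. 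Consequently $\tau(R_k) \cap \tau(R_{k'}) = \emptyset$ whenever $k \neq k'$, so $\sum_{k=1}^d |\tau(R_k)| \leq d$. As each $|\tau(R_k)| \geq 1$ and there are $d$ terms, equality must hold and $|\tau(R_k)| = 1$ for every $k$. Thus $\tau$ is constant on each $R_k$ and induces a bijection $\{R_1,\ldots,R_d\} \to \{N_1,\ldots,N_d\}$, so $|V(H_0)\cap N_i| = r$ for each $i$, proving the lemma.

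With the lemma in hand, any undirected $K^q(r;d)$-saturating sequence is automatically directed, yielding $\wsat \geq \mwsat$; substituting $\rvec = (r,\ldots,r)$ in Theorem~\ref{t:mwsat} then gives the stated formula. The only genuine subtlety is the structural lemma itself, which uses both $d \geq q$ (so that any cross-part pair of vertices extends to an edge of $H_0$) and the balancedness $r_1 = \ldots = r_d$ (so that $H_0$ and the host have the same number of parts $d$, which is what makes the pigeonhole step tight). Without either of these hypotheses, the directed and undirected weak saturation numbers can differ, so the reduction is genuinely specific to this symmetric setting.
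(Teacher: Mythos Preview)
Your proposal is correct and follows exactly the route the paper takes: reduce Corollary~\ref{c:unordered} to Theorem~\ref{t:mwsat} by observing that in the balanced case the directed and undirected notions coincide. The paper merely asserts this coincidence in one sentence (``When $H$ is balanced, \dots, there is no difference between the directed and undirected partite settings''), whereas you supply an explicit proof via the structural lemma that every copy of $K^q(r;d)$ inside $K^q_{\nvec}$ must place exactly $r$ vertices in each host class $N_i$; your pigeonhole argument using the host-colour map $\tau$ is clean and correct.
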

Our proof of Theorem~\ref{t:mwsat} combines exterior algebra techniques in the spirit of~\cite{Kal85} with a new ingredient: the use of the colorful exterior algebra inspired by the recent work of Bulavka, Goodarzi and Tancer on the colorful fractional Helly theorem~\cite{bulavka-goodarzi-tancer21socg}.

Kronenberg, Martins and Morrison (\cite{KMM}, Section 5) remarked that while the
values $\wsat(n,K_{t,t})$ and $\wsat(K_{\ell,m},K_{t,t})$ for $\ell+m=n$, which
were determined in  separate works, are of the same order of magnitude, it is
not obvious if there is any direct connection. In our second contribution in
this paper we establish such a connection using a tensoring trick. As we have mentioned earlier,
$2$-graphs $H$ satisfy $\wsat(n,H)=c_H n + o(n)$, and Alon's proof of this
fact~\cite{Alon} can be straightforwardly adjusted to show that $\wsat(K_{n,n},H)=c'_H\cdot
2n+o(n)$ when $H$ is bipartite. We show that in fact $c_H=c'_H$. A minor adjustment to our proof gives that, for any rational $0<\alpha<1$, the quantities $\wsat(n,H)$ and $\wsat(K_{\alpha n, (1-\alpha)n}, H)$, when $\alpha n\in \mathbb{Z}$, are of the same order of magnitude. Setting $H=K_{t,t}$ answers the above question of~\cite{KMM}.

For $q\geq 3$ while we
do not have (yet) the same knowledge of limiting constants, a similar method determines asymptotically the weak saturation number of complete $d$-partite $d$-graphs in the clique, generalizing Theorem 4 of~\cite{KMM}.

\begin{theorem}\label{t:tensor}
For every bipartite $2$-uniform graph $H$ we have
	\begin{equation}\label{e:limconst}
		\lim_{n\rightarrow \infty}\frac{\wsat(n,H)}{n}=\lim_{n\rightarrow \infty}\frac{\wsat(K_{n,n},H)}{2n}.
	\end{equation}
 \noindent
Furthermore, for any $d\geq 2$ and $1\leq r_1\leq\dots \leq r_d$ we have
	\begin{equation}\label{e:hypreduction}
		\wsat(n,K^{d}_{r_1,\dots,r_d})=\frac{r_1-1}{(d-1)!}n^{d-1}+O(n^{d-2}).
	\end{equation}	
\end{theorem}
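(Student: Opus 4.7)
Here is my plan for Theorem \ref{t:tensor}. Both equations are obtained via tensoring-type arguments connecting weak saturation in the clique to weak saturation in a complete multipartite host graph.

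For Equation (\ref{e:limconst}), I would establish the inequality $\limsup_n \wsat(K_{n,n}, H)/(2n) \leq \lim_n \wsat(n, H)/n$ by a \emph{bipartite double cover}. Starting from an optimal $G_0 \subseteq K_n$ with $|E(G_0)| = \wsat(n, H)$, identify the parts $V_0, V_1$ of $K_{n,n}$ with $[n]$ and define $G' := G_0 \times K_2 \subseteq K_{n,n}$ by $\{u_0, v_1\} \in E(G')$ iff $\{u, v\} \in E(G_0)$, so $|E(G')| = 2|E(G_0)|$. A saturating sequence is built in two phases. Phase 1: for each $e_i = \{u,v\}$ in $G_0$'s sequence, add both lifts $\{u_0, v_1\}$ and $\{v_0, u_1\}$, each completing a copy of $H$ obtained by placing the bipartition of the $K_n$-copy $H_i$ (completed by $e_i$) on $V_0 \cup V_1$ in one of two orientations. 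Phase 2: after all non-matching edges are present, add the diagonal matching edges $\{u_0, u_1\}$ using copies of $H$ that assign the same underlying vertex $u$ to both bipartition classes (possible since $H$ has at least one edge and $n$ is large). For the reverse inequality, I would project a near-optimal $G' \subseteq K_{n,n}$ along a generic identification of $V_0$ and $V_1$, extracting a $K_n$-saturator of roughly half the size; non-degeneracy of projected copies can be enforced by averaging over bijections, with $o(n)$ correction.

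For Equation (\ref{e:hypreduction}), the upper bound is by explicit construction. Fix $S \subseteq [n]$ with $|S| = r_1 - 1$, and let $G$ be the set of $d$-edges intersecting $S$ together with a constant-size seed structure (for instance, a nearly complete copy of $K^d_{r_1, \ldots, r_d}$ on vertices disjoint from $S$). This has $\binom{n}{d} - \binom{n - r_1 + 1}{d} + O(1) = \frac{r_1 - 1}{(d-1)!} n^{d-1} + O(n^{d-2})$ edges, and each added edge $e$ disjoint from $S$ can be completed using a copy of $K^d_{r_1, \ldots, r_d}$ with $S$ contained in the smallest class $R_1$. The matching lower bound combines two steps. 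First, by monotonicity in $H$: since $K^{d}(r_1;d)$ embeds as a subhypergraph of $K^d_{r_1, \ldots, r_d}$ (choose $r_1$ vertices from each class, including the vertex lying in the added edge, which is possible because $r_j \geq r_1$), any weakly $K^d_{r_1, \ldots, r_d}$-saturated graph is also weakly $K^{d}(r_1;d)$-saturated, so $\wsat(n, K^d_{r_1, \ldots, r_d}) \geq \wsat(n, K^{d}(r_1;d))$. Second, for the balanced case I tensor: given $G \subseteq K_n^d$ weakly $K^{d}(r;d)$-saturated, lift each edge to all $d!$ class-permutations, producing $\tilde{G} \subseteq K^d_{n, \ldots, n}$ with $|E(\tilde{G})| = d! \cdot |E(G)|$. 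Since all classes of $K^{d}(r;d)$ have equal size $r$, every permutation of a copy yields a valid \emph{directed} copy; consequently the saturating sequence of $G$ lifts stepwise to a directed saturating sequence of $\tilde{G}$, with each original step $e_i$ replaced by $d!$ substeps that use the corresponding permuted lifts of the copy $H_i$ completing $e_i$. Applying Theorem \ref{t:mwsat} gives $|E(\tilde{G})| \geq n^d - (n - r + 1)^d = d(r-1) n^{d-1} + O(n^{d-2})$, whence $|E(G)| \geq \frac{r - 1}{(d-1)!} n^{d-1} + O(n^{d-2})$.

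The main technical obstacle is the reverse direction of Equation (\ref{e:limconst}): while the forward tensoring cleanly bounds $\wsat(K_{n,n}, H)$ from above by $2\wsat(n, H)$, extracting a near-optimal $K_n$-saturator from a near-optimal $K_{n,n}$-saturator is delicate because the natural projection map may introduce degenerate copies of $H$, and some care is needed to ensure the random identification is compatible with a near-optimal saturating sequence. For Equation (\ref{e:hypreduction}), by contrast, the balanced-case tensoring proceeds cleanly thanks to the class-size symmetry of $K^{d}(r;d)$, and the monotonicity step handles the unbalanced case without further loss.
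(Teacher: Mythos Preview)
Your double-cover argument for $\wsat(K_{n,n},H)\le 2\wsat(n,H)$ matches the paper exactly. The trouble is the other inequality in~\eqref{e:limconst}. Your projection idea, which you yourself flag as ``the main technical obstacle,'' is unnecessary and, as stated, not a proof: random identification of $V_0$ with $V_1$ can collapse the endpoints of an edge, and there is no guarantee a near-optimal $K_{n,n}$-saturator survives this in any controlled way. The paper's argument here is almost trivial: given a weakly $H$-saturated $G\subseteq K_{n,n}$, simply add a $|V(H)|$-clique inside each part. The result is weakly $H$-saturated in $K_{2n}$ (first saturate $K_{n,n}$, then fill in each part using the clique as a seed), giving $\wsat(2n,H)\le \wsat(K_{n,n},H)+O(1)$. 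Combined with your double-cover bound, the limits coincide.

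For~\eqref{e:hypreduction} your lower-bound tensoring has a genuine gap. The lift $\tilde G=G\times K^d_{[d]}$ is weakly $K^d(r;d)$-saturated only in $K^d_{[n]}\times K^d_{[d]}$, not in the full $K^d(n;d)$: the saturating sequence you describe never reaches any edge of $K^d(n;d)$ in which two vertices share the same first coordinate. The number of such missing edges is $n^d-n(n-1)\cdots(n-d+1)=\Theta(n^{d-1})$, the same order as the target, so you cannot simply throw them all in before invoking Theorem~\ref{t:mwsat}. The paper fills this gap with a separate inductive lemma (Lemma~\ref{l:tensorpartite}) showing that $K^d_{[n]}\times K^d_{[d]}$ together with only $O(n^{d-2})$ extra edges is weakly $K^d(r;d)$-saturated in $K^d(n;d)$; this is the step you are missing. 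Incidentally, your monotonicity reduction to the balanced case is correct but not needed: the paper applies the Moshkovitz--Shapira formula for $\wsat(K^d(n;d),K^d_{r_1,\dots,r_d})$ directly to the unbalanced $H$.
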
 
The rest of the paper is organized as follows. In Section~\ref{sec:upper} we give a construction for the upper bound in Theorem~\ref{t:mwsat}. In Section~\ref{sec:algebra} we review the algebraic tools, setting the stage for the lower bound proof in Section~\ref{sec:lower}. In Section~\ref{sec:clique} we discuss weak saturation in the clique and prove Theorem~\ref{t:tensor}.

\paragraph*{Notation.} As usual, $[n]$ abbreviates the set $\{1,\dots,n\}$. The
symbol $\sqcup$ denotes a disjoint union of sets. For a set $M$ and integer $q\geq 0$, $\binom{M}{q}$ and $\binom{M}{\leq q}$ denote the set of all subsets of $M$ of size exactly $q$ and of size most $q$, respectively. We use $\pm$ to denote an unspecified factor of either $+1$ or $-1$. 

$K_n^q$ denotes the complete $q$-uniform hypergraph ($q$-graph) of order $n$. When the vertex set of the said $q$-graph is $[n]$, we write $K_{[n]}^q$. The complete $d$-partite $q$-graph with $n_i$ vertices in the $i$-th partition class is denoted by $K^q_{n_1,\dots,n_d}$; when $n_1=\dots=n_d=n$ we write simply $K^q(n;d)$.

Note that in Sections~\ref{sec:upper}--\ref{sec:lower} we work solely in the directed partite setup (Theorem~\ref{t:mwsat}), while in  Section~\ref{sec:clique} we deal with the undirected partite and the clique setups (Theorem~\ref{t:tensor}). 
In the directed setup our $q$-graphs are defined on a vertex set $N$ of size
$n$ with a fixed $d$-partition $N=N_1\sqcup \dots \sqcup N_d$, where
$|N_i|=n_i$ for all $i\in [d]$. Consequently, we use $K^q_{\nvec}$ to denote
the complete $d$-partite $q$-graph on $N$ with respect to this partition. (Up
to a graph isomorphism, $K^q_{\nvec}$ is uniquely determined by $q$ and
$\nvec$, thus we do not display $N$ in the notation.)
For any $M\subseteq N$ the induced subgraph of $K^q_{\nvec}$ on $M$ is denoted
by $K^q_{\nvec}[M]$. The directed weak saturation number defined above is denoted by
$\mwsat (K^q_{\nvec}, K^q_{\rvec})$, as opposed to
$\wsat(K^q_{n_1,\dots,n_d},K^q_{r_1,\dots,r_d})$ in the undirected setting, a
similar notation was employed in~\cite{KMM}.

\section{Theorem~\ref{t:mwsat}: the upper bound}\label{sec:upper}
In this section we prove the upper bound in Theorem~\ref{t:mwsat} by exhibiting a weakly $K^q_{\rvec}$-saturated $q$-graph $G$. Fix a subset $R \subseteq
N$ such that $|R\cap N_i| = r_i$ for every $i\in [d]$ and set
\begin{equation*}
  %\label{eq:rainbow}
  \Sigma := \Big \{S\in \binom{N\setminus R}{\leq q}\colon |S\cap N_i|\leq 1 \text{ for each }i\in [d]\Big \}.
\end{equation*}
We define $G$ via its complement in $K^q_{\nvec}$ as follows. For every $S\in
\Sigma$ choose an edge $\lambda(S)\in K^q_{\nvec}[R\cup S]$ satisfying $S\subseteq \lambda(S)$. Note that the assignment $\lambda$ is injective, as $\lambda(S)\cap (N \setminus R)=S$.
Recall that we associate hypergraphs with their edge sets. Define $$G:=K^q_{\nvec}\setminus \bigcup_{S\in\Sigma}\lambda(S),$$
so that
\begin{equation*}
  |E(G)| = \sum_{I\in
    \binom{[d]}{q}}\prod_{i\in I}n_i - \sum_{I\in \binom{[d]}{\leq q}} \prod_{i\in I}(n_i-r_i).
\end{equation*}

Notice that the choices of $\lambda(S)$ are not unique, but as the next lemma shows, each of them yields a weakly $K^q_{\rvec}$-saturated $q$-graph. Such non-uniqueness is a common occurrence in weak saturation: for instance, every $n$-vertex tree is an extremal example for weak triangle saturation in $K_n$.

\begin{lemma}
  \label{l:gubound}
 The $q$-graph $G$ defined above is weakly 
  $K^q_{\rvec}$-saturated. Therefore,
  $$\mwsat(K^q_{\nvec}, K^q_{\rvec}) \leq |E(G)|= \sum_{I\in
    \binom{[d]}{q}}\prod_{i\in I}n_i - \sum_{I\in \binom{[d]}{\leq q}} \prod_{i\in I}(n_i-r_i).$$
\end{lemma}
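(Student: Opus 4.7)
My plan is to construct an explicit $K^q_{\rvec}$-saturating sequence by ordering the edges $\{\lambda(S) : S \in \Sigma\}$ according to increasing $|S|$ (ties broken arbitrarily), and to exhibit for each $S$ an explicit copy of $K^q_{\rvec}$ in the current hypergraph that contains $\lambda(S)$.

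For each $S \in \Sigma$, let $U_S \subseteq R$ be any set with $|U_S \cap N_i| = 1$ whenever $|S \cap N_i| = 1$ and $U_S \cap N_i = \emptyset$ otherwise; such $U_S$ exists because $r_i \geq 1$ for every $i$ that meets $S$. Set
\[
  T_S := (R \setminus U_S) \cup S.
\]
Then $|T_S \cap N_i| = r_i$ for every $i \in [d]$, so $K^q_{\nvec}[T_S]$ is a copy of $K^q_{\rvec}$ with $r_i$ vertices in $N_i$. Moreover $\lambda(S) \subseteq R \cup S$ satisfies $\lambda(S) \cap N_i = \emptyset$ whenever $U_S \cap N_i \neq \emptyset$ (as $\lambda(S)$ has at most one vertex per class and $S \cap N_i \neq \emptyset$), so $\lambda(S) \subseteq T_S$.

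The key step is the following claim: every edge $e \in K^q_{\nvec}[T_S]$ with $e \neq \lambda(S)$ is either in $G$ or equals $\lambda(S')$ for some $S' \in \Sigma$ with $|S'| < |S|$. To prove this, set $S' := e \cap (N \setminus R)$. Since $T_S \cap (N \setminus R) = S$, we have $S' \subseteq S$, and because $e$ is an edge of $K^q_{\nvec}$, $S'$ has at most one vertex per class and $|S'| \leq q$, so $S' \in \Sigma$. If $e \notin \lambda(\Sigma)$, then $e \in G$ and we are done. Otherwise $e = \lambda(S'')$ for a (unique by injectivity) $S'' \in \Sigma$; since $\lambda(S'') \supseteq S''$ and $\lambda(S'') \cap (N \setminus R) = S''$, we have $S'' = e \cap (N \setminus R) = S'$. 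If $S' = S$ then $e = \lambda(S)$, contradicting our assumption. Hence $S' \subsetneq S$, so $|S'| < |S|$ and $\lambda(S')$ appears earlier in the ordering.

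Combining these observations, when $\lambda(S)$ is processed, the copy $K^q_{\nvec}[T_S]$ of $K^q_{\rvec}$ lies inside $G \cup \{\lambda(S') : |S'| \leq |S|\}$ and contains $\lambda(S)$, so the ordering is a valid $K^q_{\rvec}$-saturating sequence. This shows that $G$ is weakly $K^q_{\rvec}$-saturated. The edge count $|E(G)|$ follows directly from the injectivity of $\lambda$ and from the fact that $|\Sigma| = \sum_{I \in \binom{[d]}{\leq q}}\prod_{i \in I}(n_i-r_i)$ (by grouping elements of $\Sigma$ by their support of partition classes), while $|E(K^q_{\nvec})| = \sum_{I \in \binom{[d]}{q}}\prod_{i \in I} n_i$. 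The only mildly delicate point is verifying that $T_S$ really is a valid vertex set of a $K^q_{\rvec}$ containing $\lambda(S)$, which reduces to the partition-class count above, and that the ordering by $|S|$ breaks all circular dependencies in the saturation process; both are handled by the argument given.
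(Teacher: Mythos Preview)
Your argument is correct and follows essentially the same approach as the paper: both order the missing edges $\lambda(S)$ by $|S|$ and verify that, at the moment $\lambda(S)$ is added, a suitable copy of $K^q_{\rvec}$ on vertices contained in $R\cup S$ is complete. The paper works with the slightly larger set $R\cup S$ and invokes ``a fortiori'' for the $K^q_{\rvec}$ inside, whereas you explicitly carve out $T_S=(R\setminus U_S)\cup S$; this is a cosmetic difference. One small slip: the sentence ``$\lambda(S)\cap N_i=\emptyset$ whenever $U_S\cap N_i\neq\emptyset$'' is false as written (indeed $\lambda(S)$ contains the vertex of $S$ in $N_i$); what you need, and what your parenthetical actually justifies, is $\lambda(S)\cap U_S=\emptyset$, which suffices for $\lambda(S)\subseteq T_S$.
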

\begin{proof}
  For each $0\leq k\leq q$ let 
  $$G_k := G \cup \{T\in K^q_{\nvec}\colon |T\setminus R| \leq k\},$$
  and put $G_{-1}:=G$. 
  We claim that adding
  any new edge $L\in K^q_{\nvec}$ with $|L\setminus R| = k$  to $G_{k-1}$ creates a new copy
  of $K^q_{\rvec}$ containing $L$. This gives rise to a $K^q_{\rvec}$-saturating sequence between $G_{k-1}$ and $G_k$ and, by extension, between
  $G=G_{-1}$ and $G_q=K^q_{\nvec}$.
  
  First, notice that $G_0$ is obtained from $G_{-1}$ by adding the sole missing
  edge $\lambda(\emptyset)$. Doing so creates a new copy of $K^q_{\rvec}$,
  namely $K^q_{\nvec}[R]$. For an arbitrary  $k$, suppose that $L$ is a missing edge in
  $G_{k-1}$ such that $S:= L\setminus R$ is of size $k$. Observe that every $T\in K^q [R \cup S]$ is an edge in
  $G_{k-1}$ unless $T=L$ . Indeed, if $|T\setminus R|<k$ then this holds by
  definition of $G_{k-1}$. While otherwise we have $T\setminus R=S$. Hence, by
  the definition of $G$, we have $L=\lambda(S)$, so that either $T=L$ or $T\in
  G\subseteq G_{k-1}$. Therefore, adding $L$ to $G_{k-1}$ creates a new copy of
  $K^q_{\nvec}[R\cup S]$ containing $L$ and a fortiori also a new copy of $K^q_{\rvec}$ containing $L$, as desired. 
\end{proof}

\section{Algebraic background}\label{sec:algebra}
In this section we introduce the linear algebra tools needed for the proof of
the lower bound in Theorem~\ref{t:mwsat}. In Sections~\ref{subsec:exterior}
and~\ref{subsec:lip} we largely follow~\cite[Sec.~2]{kalai84intersection}
though we sometimes provide more detail. (For comparison~\cite{Kal85}
works with a dual generic basis. We believe that the difference is
not essential.) In Section~\ref{subsec:col} we loosely
follow~\cite{bulavka-goodarzi-tancer21socg}.

Before we start explaining the algebraic background, we will try to sketch why
algebraic tools can be useful in this context.
This sketch should be understood loosely---we do not provide any guarantees for
the claims in this sketch. In particular, many important technical details are
skipped in the sketch. Understanding this sketch is not required in the
following text, thus it can be skipped.

Consider first the somewhat trivial case of providing the lower bound on
$\wsat(n,K_3)$, the weak saturation number of the complete graph $K_3$ in
$K_n$. Consider a subgraph $G$ of $K_n$ and a saturating sequence $e_1, \dots
e_k$ of edges in $E(K_n) \setminus E(G)$. Let $G_i := G \cup \{e_1, \dots,
e_i\}$. Because the sequence is saturating, we know that $G_i$ contains a copy of $K_3$
containing $e_i$. This means that the dimension of the cycle space of $G_i$ is
strictly larger than the dimension of the cycle space of $G_{i-1}$. Because the
final dimension of the cycle space of $K_n$ equals $\binom{n-1}2$, we may
perform at most $\binom{n-1}2$ such steps. In other words $k \leq \binom{n-1}2$
and thus $|E(G)| \geq \binom n2 - \binom{n-1}2$ as required. 

In the language of algebraic topology (which we however do not use in the
proofs, no topological background is required), the property that the dimension
of the cycle space increases can be phrased so that a new copy of $K_3$ in each
step belongs to the kernel of the standard boundary operator. For more
complicated (hyper)graphs than $K_3$ it is actually useful to use several
independent boundary operators in order to generalize the aforementioned
approach. Using such independent operators can be actually efficiently phrased
in terms of exterior algebra (without mentioning algebraic topology). They
correspond to the \emph{left interior product}, which we will discuss later
on, subject to some suitable independence (genericity) condition.\footnote{Perhaps the closest relation between the boundary operators and
the left interior product can be seen in Lemma~\ref{l:fSLeR} interpreting $e_R$
as a simplex with set of vertices $R$, and $f_T\lip$ as an operator removing $t$ times the top-dimensional simplices, yielding a linear combination of simplices
$f_S$ with $r-t$ vertices. (However, for this relation, 
it would be even better to express the right hand side using $e_S$ so that all
possible $e_S$ would appear.) 
%But for actual computations, expression via $f_S$
%is better because it bounds the dimension of the space where $f_T\lip_R$ appears.) 
Adding a colorful aspect (in our case) then makes it easier to work
with multipartite (hyper)graphs rather than complete ones.}

\subsection{Exterior algebra.}\label{subsec:exterior} 
Let $N$ be a set of size $n$, ordered with
a total order $<$. Later on the elements of $N$ will represent vertices of a $q$-graph and we will typically denote them by letters such as
$v$ or $w$. Let $V$ be an $n$-dimensional real vector space with a basis $(e_v)_{v \in N}$.
The \emph{exterior algebra} of $V$, denoted by $\bigwedge V$, is a
$2^n$-dimensional vector space with basis $(e_S)_{S\subseteq N}$ and an
associative 
bilinear product operation, denoted by $\wedge$, that satisfies 
\begin{enumerate}[(i)]
  \item $e_{\emptyset}$ is the
    neutral element, i.e. $e_{\emptyset}\wedge e_S = e_S = e_S \wedge
    e_{\emptyset}$; 
 \item $e_S = e_{s_1} \wedge
\cdots \wedge e_{s_k}$ for $S = \{s_1 < \cdots < s_k\} \subseteq N$; 
 \item $e_v
\wedge e_w = - e_w\wedge e_v$ for all $v,w \in N$. 
 \end{enumerate}
For $0\leq k\leq n$ we denote by $\bigwedge^k V$ the subspace of $\bigwedge V$ with basis $(e_S)_{S\in \binom{N}{k}}$. %For $f\in \bigwedge^k V$ and $f\in \bigwedge^{\ell} V$ we have $f \wedge g\in \bigwedge^{k+\ell} V$.
Denote by $\langle \cdot, \cdot \rangle$ the standard inner product (dot
product) on $V$ as well as on $\bigwedge V$ with respect to the basis
$(e_v)_{v\in N}$ and $(e_S)_{S\subseteq N}$ respectively; that is, for every
pair of sets $S,T\subseteq
N$, the inner product $\langle e_S, e_T \rangle$ is $1$ if $S=T$ and $0$ otherwise.

If $(f_v)_{v \in N}$ is another basis of $V$,
then $(f_S)_{S\subseteq N}$ is a new basis of $\bigwedge V$, where $f_S$
stands for $f_{s_1} \wedge \cdots \wedge f_{s_k}$ for $S = \{s_1 < \cdots <
s_k\} \subseteq N$. Similarly, $(f_S)_{S\in \binom{N}k}$ is a basis of
$\bigwedge^k V$ for $k \in \{0, \dots, n\}$. The formulas (i), (ii) and
(iii) remain valid for the basis $(f_v)_{v \in N}$ due to definition of $f_S$
and bilinearity of $\wedge$.
%: 
%(after replacing $e$-s with$f$-s); 
%(i) and (ii) hold by definition of $f_S$, and (iii) follows from the
%bilinearity of $\wedge$. 
In particular, $\bigwedge V$ and $\bigwedge^k V$ do not depend on the
initial choice of the basis.
Using (ii) and (iii)
iteratively, for $S, T \subseteq N$ we get
\begin{equation}
  \label{e:sign_wedge_new}
  f_S \wedge f_T = \begin{cases} 
    \sgn(S,T) f_{S\cup T} &\hbox{ if } S \cap T = \emptyset\\
    0 &\hbox{ if } S \cap T \neq \emptyset,\\
  \end{cases}
\end{equation}
where $\sgn(S,T)$ is the sign of the permutation of $S \cup T$ obtained by
first placing the elements of $S$ (in our total order $<$) and then the elements of $T$. Equivalently, $\sgn(S,T) = (-1)^{\alpha(S,T)}$ where
$\alpha(S,T) = |\{(s,t) \in S\times T \colon  t < s\}|$ is the number of
transpositions.

As a consequence we obtain the following useful formula. Let $M_1,
\dots, M_\ell$ be pairwise disjoint subsets of $N$ and $s_1, \dots, s_\ell$ be
integers with $0 \leq s_i \leq |M_i|$. Suppose that for each $i\in [\ell]$ we are given $$h_i = \sum_{S_i \in
\binom{M_i}{s_i}} \lambda_{S_i} f_{S_i}$$ 
for $\lambda_{S_i} \in \R$ (so that $h_i \in \bigwedge^{s_i} V$).
Then by bilinearity of $\wedge$ and~\eqref{e:sign_wedge_new} we get
\begin{equation}
  \label{e:wedge_multilinearity_new}
  h_1\wedge \cdots \wedge h_{\ell} 
  = \sum_{\substack{(S_1,\dots ,S_\ell) \in \\ \binom{M_1}{s_1} \times \dots \times
  \binom{M_\ell}{s_\ell}}} \left 
  (\prod_{i\in [\ell]} \lambda_{S_i} \right ) f_{S_1}\wedge \cdots \wedge
  f_{S_\ell} 
  = \sum_{\substack{(S_1,\dots ,S_\ell) \in \\ \binom{M_1}{s_1} \times \dots \times
  \binom{M_\ell}{s_\ell}}} 
  \pm \left (\prod_{i\in [\ell]} \lambda_{S_i} \right ) f_{S_1 \cup \cdots \cup
  S_{\ell}}.
\end{equation}
%Here and going forward 

Let $A = (a_{vw})_{v,w \in N}$
be the transition matrix from $(e_v)_{v\in N}$ to $(f_v)_{v \in N}$, meaning that $f_v = \sum_{w \in N} a_{vw}e_w$. Then, for $S \subseteq N$ of size
$k$, $f_S$ can be expressed as
\begin{equation}
  \label{e:f_via_e_d}
  f_S = \sum\limits_{T \in \binom{N}{k}} \det (A_{S|T}) e_T,
\end{equation}
where $A_{S|T}$ is the submatrix of $A$ formed by rows in $S$ and columns in
$T$, i.e. $A_{S|T} = (a_{vw})_{v\in S, w\in T}$.

%If in addition the basis $(f_v)_{v\in N}$ is orthonormal then
%  $(f_S)_{S\subseteq N}$ is orthonormal as well. To see this, 
As noted in~\cite{kalai84intersection}, it follows from the Cauchy-Binet
formula that if the basis $(f_v)_{v\in N}$ is orthonormal then
  $(f_S)_{S\subseteq N}$ is orthonormal as well. For completeness, we provide a
  short explanation.
Let $S,L\subseteq
  N$ be a pair of subsets. If $|S| \neq |L|$, then
  $f_S$ and $f_L$ belong to two orthogonal subspaces of $\bigwedge V$, namely $\bigwedge^{|S|} V$ and
  $\bigwedge^{|L|} V$, and so $\langle f_S, f_L \rangle = 0$. On the other
  hand, if $|S|=|L|=:k$, then by writing $f_S$ and $f_L$ in the standard basis $(e_T)_{T\subseteq N}$
  we have that $$\langle f_S, f_L \rangle = \sum_{T\in \binom{N}{k}} \det
  (A_{S|T}) \det (A^t_{L|T}) = \det (A_{S|N} A^t_{L|N}),$$ where $B^t$ stands for the 
  transpose matrix of $B$ (and expressions like $A^t_{L|T}$ stand for
  $(A_{L|T})^t$), and the last equality holds by the Cauchy-Binet formula
  (see e.g. Section 1.2.4 of~\cite{Gantmacher64}). Notice that for any $u\in S$
  and $w\in L$ we have $(A_{S|N} A^t_{L|N})_{u,w} = \langle f_u, f_w \rangle$, and since $(f_v)_{v\in
    N}$ is orthonormal this is $1$ if $u=w$ and
  $0$ otherwise. Therefore, if $S=L$, the product $A_{S|N}A^t_{L|N}$ is the identity
  matrix and consequently the determinant will be $1$. On the other hand, if $S\neq L$, the product $A_{S|N}A^t_{L|N}$ will have a zero column, and so the
  determinant will be $0$. The above claim follows.

We say that the change of
basis from $(e_v)_{v \in N}$ to $(f_v)_{v \in N}$
is \emph{generic} if $\det (A_{S|T})
\neq 0$ for every $S, T \subseteq N$ of the same size; that is, every square
submatrix of $A$ has full rank. It is known (see e.g.~\cite{kalai84intersection}) that
$(f_v)_{v\in N}$ can be chosen to be both generic and orthonormal. For a basis $(f_v)_{v\in N}$ generic with respect to $(e_v)_{v\in N}$ and a
pair of sets $S,T\in \binom{N}{k}$ we have 
\begin{equation}
  \label{e:fS_eT}
  \langle f_S, e_T \rangle \stackrel{\eqref{e:f_via_e_d}}{=} \langle \sum_{T'\in \binom{N}{k}} \det(A_{S|T'})  e_{T'},e_T \rangle = \sum_{T'\in \binom{N}{k}} \det(A_{S|T'}) \langle e_{T'},e_T \rangle= \det A_{S|T} \neq 0.
\end{equation}
\subsection{Left interior product.}
\label{subsec:lip}
%The left interior product is a standard tool when dealing with the exterior algebra. For completeness, we shall derive below its existence, uniqueness and basic properties. 
The following lemma defines $g \lip f$, the \emph{left interior product} of $g$
and $f$. We refer to Section 2.2.6 of \cite{Rosen19} for a
more extensive coverage of the topic.
\begin{lemma}
  \label{l:lip_existence_uniqueness}
  For any $f,g \in \bigwedge V$ there exists a unique element $g\lip f \in \bigwedge V$
  that satisfies
  \begin{equation} \label{e:lip_definition}
  	\langle h, g \lip f\rangle = \langle h \wedge g, f\rangle \text{ for all } h \in \bigwedge V.
  \end{equation}
 % ~\eqref{e:lip_definition}. 
  Furthermore, assuming $f\in \bigwedge^s V$ and $g\in \bigwedge^t V$, if $t > s$ then $g\lip f =0$, while if $t\leq s$ then $g\lip f \in \bigwedge^{s-t}V$.
\end{lemma}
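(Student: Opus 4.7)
The plan is to derive the lemma from a finite-dimensional Riesz representation style argument, using that the inner product on $\bigwedge V$ is non-degenerate (indeed positive definite) with respect to the orthonormal basis $(e_S)_{S\subseteq N}$.

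For uniqueness, I would observe that if $a, b \in \bigwedge V$ both satisfy $\langle h, a\rangle = \langle h, b\rangle$ for every $h \in \bigwedge V$, then letting $h$ run through the basis $(e_S)_{S\subseteq N}$ shows that every coordinate of $a-b$ is zero, so $a=b$. For existence, fix $f$ and $g$ and consider the linear functional $\Phi : \bigwedge V \to \R$ given by $\Phi(h) := \langle h\wedge g, f\rangle$; since $\bigwedge V$ is finite dimensional and $\langle \cdot,\cdot\rangle$ is non-degenerate, there is a unique $u \in \bigwedge V$ with $\Phi(h) = \langle h, u\rangle$ for all $h \in \bigwedge V$, and one defines $g\lip f := u$, which is exactly \eqref{e:lip_definition}.

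For the degree statement, both sides of \eqref{e:lip_definition} are bilinear in $(f, g)$, so it suffices to check the assertion when $f = e_S$ and $g = e_T$ are basis elements with $|S| = s$ and $|T| = t$. Writing $e_T \lip e_S = \sum_{U\subseteq N} \lambda_U e_U$, one has $\lambda_U = \langle e_U, e_T\lip e_S\rangle = \langle e_U \wedge e_T, e_S\rangle$ by \eqref{e:lip_definition}. By \eqref{e:sign_wedge_new} this inner product vanishes unless $U \cap T = \emptyset$ and $U \cup T = S$, which forces $T \subseteq S$ and $U = S \setminus T$, in which case it equals $\pm 1$. Hence $e_T \lip e_S = \pm e_{S\setminus T}$ when $T\subseteq S$ and $0$ otherwise. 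In particular $e_T \lip e_S = 0$ whenever $t > s$, and otherwise $e_T \lip e_S \in \bigwedge^{s-t} V$. Extending by bilinearity to arbitrary $f \in \bigwedge^s V$ and $g \in \bigwedge^t V$ gives the stated degree property.

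I do not expect any real obstacle here: the lemma is a standard consequence of the self-duality of $\bigwedge V$ under the inner product, and the only mild bookkeeping concerns signs, which never cause trouble because the statement asserts only uniqueness together with a degree bound, rather than a specific sign in the basis expansion.
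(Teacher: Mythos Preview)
Your proof is correct and essentially the same as the paper's: existence via Riesz representation is just an abstract rephrasing of the paper's explicit formula $g\lip f = \sum_{S\subseteq N}\langle e_S\wedge g, f\rangle e_S$, and the uniqueness arguments coincide. For the degree statement the paper argues slightly more directly---observing that $\langle e_S, g\lip f\rangle = \langle e_S\wedge g, f\rangle$ vanishes unless $|S|+t=s$ by orthogonality of the graded pieces, without expanding $g$---whereas you reduce both $f$ and $g$ to basis elements and compute $e_T\lip e_S$ explicitly (in effect proving the standard-basis case of Lemma~\ref{l:lip_fTfS} along the way); both routes are perfectly fine.
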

\begin{proof}
For $f, g\in \bigwedge V$ we set 
\[g\lip f := \sum_{S\subseteq N}\langle
  e_S\wedge g, f \rangle e_S.\]
To verify that this satisfies~\eqref{e:lip_definition} let
$h\in \bigwedge V$ be arbitrary. By bilinearity of $\langle \cdot, \cdot
\rangle$ and $\wedge$, and orthonormality of  $(e_S)_{S\subseteq N}$ we have
\begin{align*}
  \langle h, g\lip f \rangle
  &= \langle h, \sum_{S\subseteq N}\langle e_S\wedge g, f \rangle e_S \rangle 
  =  \sum_{S\subseteq N} \langle e_S\wedge g, f \rangle \langle h, e_S \rangle\\
  &= \big\langle \sum_{S\subseteq N}  \langle h, e_S \rangle (e_S\wedge g), f  \big\rangle  
  = \big\langle \big(\sum_{S\subseteq N} \langle h, e_S \rangle e_S\big) \wedge g, f \big\rangle\\
  &=\langle h\wedge g, f \rangle.
\end{align*}
%where the first equality follows from the linearity of $\langle \cdot, f
%\rangle$ and the second on one from the linearity of $\cdot \wedge
%g$. 
%where in the last step we used the .
\noindent
To show uniqueness, suppose that $z$ is an
element in $\bigwedge V$ that satisfies~\eqref{e:lip_definition}. 
Then for each $T \subseteq N$ we have $$\langle e_T, z \rangle \stackrel{\eqref{e:lip_definition}}{=} \langle e_T\wedge g, f
\rangle \stackrel{\eqref{e:lip_definition}}{=} \langle e_T, g\lip f \rangle.$$
Therefore $z$ and $g\lip f$ are identical, as their inner products with all basis elements coincide.

Now assume that $f\in \bigwedge^s V$ and $g\in \bigwedge^t V$, %Suppose first that $t > s$ 
and let $S\subseteq N$ be arbitrary. By~\eqref{e:lip_definition} we have
$$\langle e_S, g\lip f \rangle = \langle e_S\wedge g, f  \rangle.$$ Observe that $e_S\wedge g \in \bigwedge^{|S|+t}$ while $f\in \bigwedge^s V$ and these spaces are orthogonal unless $|S|+t=s$. Hence,
$g\lip f = 0$ for $t>s$ and  $g\lip f \in \bigwedge^{s-t}V$ otherwise.
%. Now suppose that $s\geq t$. To see that
%$g\lip f \in \bigwedge^{s-t} V$ it suffices to show that  $\langle e_P,g\lip f \rangle = 0$ whenever $|P| \neq s-t$. For this purpose, let
%$P\subseteq N$ be of size $p\neq s-t$. By~\eqref{e:lip_definition} we have 
%$$\langle e_P, g\lip f  \rangle = \langle e_P\wedge g, f
%\rangle.$$ Notice that $e_P\wedge g \in \bigwedge^{t+p} V$ while $f \in \bigwedge^s V$. These are orthogonal subspaces in $\bigwedge V$ and consequently the
%above inner product is zero.  
\end{proof}
\noindent
It is straightforward to check from the definition that the left interior product is bilinear:
\begin{itemize}
  \item $(f+g)\lip h = (f\lip h) + (g\lip h)$,
  \item $f\lip(g+h) = (f\lip g) + (f\lip h)$,
\end{itemize}
 and satisfies
\begin{equation}
  \label{e:double_left}
  h\lip (g\lip f) = (h\wedge g)\lip f.
\end{equation}
\noindent
With $\sgn (\cdot, \cdot)$ as defined in Section~\ref{subsec:exterior} we obtain the following statement. 
\begin{lemma}
  \label{l:lip_fTfS}
  Let $(f_v)_{v\in N}$ be an orthonormal basis of $V$. Then, for any $S,T \subseteq N$ we
  have 
  \[
    f_T \lip f_S = 
    \begin{cases} 
      \sgn(S\setminus T, T) f_{S\setminus T} & \text{if } T \subseteq S, \\
      0       & \text{otherwise.} 
    \end{cases}
  \]
\end{lemma}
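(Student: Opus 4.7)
The plan is to pin down $f_T \lip f_S$ by computing its inner product with every basis vector $f_U$, and then to read off the coefficients using the orthonormality of $(f_U)_{U \subseteq N}$.

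First I would observe that although the uniqueness part of Lemma~\ref{l:lip_existence_uniqueness} was argued via the standard basis $(e_T)_{T\subseteq N}$, the same reasoning shows that any element $z \in \bigwedge V$ is determined by the inner products $\langle f_U, z\rangle$ for $U \subseteq N$, since $(f_U)_{U\subseteq N}$ is an orthonormal basis (as established at the end of Section~\ref{subsec:exterior}). Hence it suffices to compute $\langle f_U, f_T \lip f_S\rangle$ for every $U \subseteq N$, expand $f_T \lip f_S$ in the $(f_U)_{U\subseteq N}$ basis, and match.

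Next, using the defining property~\eqref{e:lip_definition} of the left interior product with $h = f_U$, I would write
\[
\langle f_U, f_T \lip f_S\rangle = \langle f_U \wedge f_T, f_S\rangle.
\]
By formula~\eqref{e:sign_wedge_new} applied to the basis $(f_v)_{v\in N}$, the right-hand side vanishes whenever $U \cap T \neq \emptyset$, and equals $\sgn(U,T)\,\langle f_{U\cup T}, f_S\rangle$ otherwise. Orthonormality of $(f_U)_{U\subseteq N}$ then forces this to be nonzero only when $U \cap T = \emptyset$ and $U \cup T = S$, i.e.\ precisely when $T \subseteq S$ and $U = S \setminus T$, in which case the value is $\sgn(S\setminus T, T)$.

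Combining these observations would finish the proof: if $T \not\subseteq S$ then every $\langle f_U, f_T \lip f_S\rangle$ vanishes, so $f_T \lip f_S = 0$; whereas if $T \subseteq S$ then the only nonzero coefficient in the $(f_U)$-expansion occurs at $U = S \setminus T$, yielding $f_T \lip f_S = \sgn(S\setminus T, T)\, f_{S\setminus T}$. I do not anticipate any real obstacle here; the only thing to be mildly careful about is that the sign produced by~\eqref{e:sign_wedge_new} from $f_{S\setminus T} \wedge f_T$ is exactly $\sgn(S\setminus T, T)$, which matches the claim verbatim.
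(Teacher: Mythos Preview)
Your proposal is correct and follows essentially the same approach as the paper: both compute $\langle f_U, f_T \lip f_S\rangle = \langle f_U \wedge f_T, f_S\rangle$ via~\eqref{e:lip_definition}, then use~\eqref{e:sign_wedge_new} and orthonormality of $(f_U)_{U\subseteq N}$ to identify the single surviving term. The only cosmetic difference is that the paper first invokes Lemma~\ref{l:lip_existence_uniqueness} to restrict attention to $U\in\binom{N}{|S|-|T|}$, whereas you run the argument uniformly over all $U\subseteq N$; this saves a small case distinction but is otherwise the same proof.
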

\begin{proof}
  Put $s := |S|$ and $t := |T|$. If $t>s$ then by
  Lemma~\ref{l:lip_existence_uniqueness} we have $f_T\lip f_S = 0$ and
  the conclusion follows. So we may assume that $s\geq t$, and by the same lemma
  it follows that $f_T\lip f_S \in \bigwedge^{s-t} V$. Since the basis $(f_v)_{v\in N}$ is
orthonormal, so is the basis $(f_L)_{L\in \binom{N}{s-t}}$ of
$\bigwedge^{s-t} V$, as observed in Section~\ref{subsec:exterior}.
Expressing $f_T\lip f_S$ in this basis and using~\eqref{e:lip_definition}, we obtain
  \begin{equation*}
    %\label{e:f_sum_L}
    f_T\lip f_S =
    \sum_{L\in\binom{N}{s-t}}\langle f_L, f_T\lip f_S \rangle  f_L =
    \sum_{L\in \binom{N}{s-t}} \langle f_L \wedge f_T , f_S \rangle f_L.
  \end{equation*}
  Due to~\eqref{e:sign_wedge_new} and orthonormality of $(f_v)_{v\in N}$ we have $\langle f_L \wedge f_T , f_S \rangle=0$ unless $T
  \subseteq S$ and $L = S \setminus T$. Therefore, using~\eqref{e:sign_wedge_new} again we get  
  \[
    f_T \lip f_S = \begin{cases}
      \langle f_{S\setminus T} \wedge f_T, f_S\rangle f_{S\setminus T}  
      %=
      %\sgn(S \setminus T, T) \langle f_S, f_S\rangle f_{S\setminus T} 
      =
       \sgn(S \setminus T, T) f_{S\setminus T} & \hbox{ if } T \subseteq S,\\
      0 & \hbox{ if } T \not\subseteq S.\\
    \end{cases}
  \]
\end{proof}

\begin{lemma}
  \label{l:fSLeR}
  Let $(f_v)_{v\in N}$ be a generic orthonormal basis of $V$ with respect to $(e_v)_{v\in N}$. For
  a pair of sets $T,R\subseteq N$ of sizes $t$ and $r$, respectively, such that $r \geq t$ we have 
  \begin{equation*}
    f_T\lip e_R = \sum_{S\in \binom{N\setminus T}{r-t}} \lambda_S f_S,
  \end{equation*}
  where all the coefficients $\lambda_S$ are non-zero.
\end{lemma}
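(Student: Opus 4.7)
The plan is to expand $e_R$ in the orthonormal basis $(f_{S'})_{S'\subseteq N}$ of $\bigwedge V$, then apply bilinearity of $\lip$ together with Lemma~\ref{l:lip_fTfS} term by term. Since $e_R \in \bigwedge^r V$ and $(f_{S'})_{S'\in\binom{N}{r}}$ is an orthonormal basis of $\bigwedge^r V$, the expansion reads
$$e_R \;=\; \sum_{S'\in\binom{N}{r}} \langle f_{S'}, e_R\rangle\, f_{S'},$$
and by~\eqref{e:fS_eT} each coefficient equals $\det A_{S'|R}$, which is non-zero thanks to the genericity of the basis change.

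Next I would compute $f_T \lip e_R$ by substituting the above expansion and pulling $\lip$ inside by bilinearity, getting $f_T \lip e_R = \sum_{S'\in\binom{N}{r}} \det(A_{S'|R})\, (f_T \lip f_{S'})$. By Lemma~\ref{l:lip_fTfS} only the summands with $T \subseteq S'$ survive, and each such summand equals $\sgn(S'\setminus T, T)\, f_{S'\setminus T}$. Reparametrising by $S := S'\setminus T$, which ranges precisely over $\binom{N\setminus T}{r-t}$ as $S'$ ranges over the sets with $T \subseteq S'$ and $|S'|=r$, yields
$$f_T \lip e_R \;=\; \sum_{S \in \binom{N\setminus T}{r-t}} \sgn(S,T)\,\det\bigl(A_{S\cup T\,|\,R}\bigr)\, f_S,$$
so $\lambda_S = \pm \det(A_{S\cup T\,|\,R})$.

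The concluding step is to invoke genericity: since $|S\cup T| = r = |R|$, the submatrix $A_{S\cup T\,|\,R}$ is square, and by assumption every square submatrix of $A$ has full rank, so $\lambda_S \neq 0$ for each $S\in\binom{N\setminus T}{r-t}$. I do not anticipate any real obstacle here; the statement is essentially a direct consequence of the machinery already developed, the only mildly delicate point being the bookkeeping of the signs $\sgn(S,T)$, which are irrelevant for the non-vanishing conclusion.
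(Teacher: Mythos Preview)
Your proof is correct and essentially mirrors the paper's argument. The paper expands $f_T \lip e_R$ directly in the orthonormal basis $(f_S)_{S\in\binom{N}{r-t}}$ and computes each coefficient as $\langle f_S, f_T\lip e_R\rangle = \langle f_S\wedge f_T, e_R\rangle$ via~\eqref{e:lip_definition}, arriving at the same $\lambda_S = \pm\langle f_{S\cup T}, e_R\rangle = \pm\det A_{S\cup T\,|\,R}$; your route of first expanding $e_R$ in the $f$-basis and then applying Lemma~\ref{l:lip_fTfS} is a minor reordering of the same computation.
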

\begin{proof}
  By Lemma~\ref{l:lip_existence_uniqueness} we have that $f_T\lip e_R\in
  \bigwedge ^{r-t} V$. Since $(f_S)_{S\in \binom{N}{r-t}}$ is an orthonormal basis of
  $\bigwedge^{r-t} V$, we can write 
  %$f_T\lip e_R$ in this basis as
  $$f_T\lip e_R = \sum_{S\in \binom{N}{r-t}} \langle f_S, f_T\lip e_R \rangle
  f_S.$$ 
  Applying~\eqref{e:lip_definition} and~\eqref{e:sign_wedge_new} gives
  $$
  \langle f_S, f_T\lip e_R \rangle = \langle  f_S\wedge f_T, e_R \rangle
  = \begin{cases} \pm \langle f_{S\cup T}, e_R \rangle &\hbox{ if } S\cap T =
  \emptyset, \hbox{ equivalently if } S \in \binom{N\setminus T}{r-t},\\ 0 &\hbox{ otherwise.} \end{cases}
  $$
  Setting $\lambda_S = \langle  f_S\wedge f_T, e_R \rangle$ for $S \in
  \binom{N\setminus T}{r-t}$, we thus obtain 
  $$f_T\lip e_R = \sum_{S\in \binom{N \setminus T}{r-t}} \lambda_S f_S,$$ 
  as
  claimed. In addition, since we assumed that $(f_v)_{v\in N}$ is generic with
  respect to $(e_v)_{v\in N}$, we have $\lambda_S = \pm \langle f_{S\cup T},
  e_R \rangle \neq 0$ by~\eqref{e:fS_eT} for all $S \in \binom{N\setminus
  T}{r-t}$.
\end{proof}

\subsection{Colorful exterior algebra.}\label{subsec:col}
As we are interested in multipartite hypergraphs it is natural to assume
in addition that the set $N$ is partitioned as a disjoint union $N =
N_1 \sqcup N_2 \sqcup \cdots \sqcup N_d$; consistently with the introduction
$n_i := |N_i|$. Here each $N_i$ is ordered by a total
order $<_i$. We extend these orders to the whole $N$ as follows, for $x\in N_i$
and $y\in N_j$, we say that
\begin{equation*}
  %\label{e:colorful_order}
  x<y \text{ if } i<j \text{ or if } i=j \text{ and } x<_i y.
\end{equation*}
 Given the standard basis $(e_v)_{v
\in N}$ of $V$ we say that a basis $(f_v)_{v \in N}$ is \emph{colorful} with respect
to this partition if $(f_v)_{v \in N_i}$ generates the same subspace of $V =
\R^N$ as $(e_v)_{v \in N_i}$ for every $i \in [d]$; we denote this
subspace $V_i$. Put differently, the 
transition matrix $A$ from $(e_v)_{v \in N}$ to $(f_v)_{v \in N}$ is a
block-diagonal matrix with blocks $N_i \times N_i$ for $i \in [d]$.
We also say that $(f_v)_{v \in N}$ is \emph{colorful generic} (with respect
to this partition) if the basis change from $(e_v)_{v \in N_i}$ to $(f_v)_{v
  \in N_i}$ is generic for every $i \in [d]$. It is possible to choose a basis
  which is simultaneously colorful generic with respect to a given partition and
  orthonormal by choosing each change of basis from $(e_v)_{v \in N_i}$ to $(f_v)_{v
  \in N_i}$ generic and orthonormal. 

By $\bigwedge V_i$ we
  denote the subalgebra of $\bigwedge V$ generated by $e_S$ for $S \subseteq
  N_i$ and by $\bigwedge^k V_i$ the subspace of $\bigwedge V_i$ with basis
  $(e_S)_{S \in \binom{N_i}k}$; that is, $\bigwedge^k V_i = \bigwedge^k V \cap
  \bigwedge V_i$. 
  %(as spaces). 
  We claim that the left interior product behaves nicely with respect to
a colorful partition. To see this, we first need an auxiliary lemma about signs.

\begin{lemma}
\label{l:signs}
Let $U$ and $T$ be disjoint subsets of $N$ and for all $i\in [d]$ let $U_i := U \cap N_i$, $T_i := T
\cap N_i$, $u_i := |U_i|$ and $t_i := |T_i|$. Then
  \[
    \sgn(U, T) = (-1)^c \sgn(U_1, T_1) \cdots \sgn(U_d, T_d),
  \]
where $c$ depends only on $u_1, \dots, u_d$ and $t_1, \dots, t_d$.
\end{lemma}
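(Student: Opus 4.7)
The plan is to unpack the definition $\sgn(U,T)=(-1)^{\alpha(U,T)}$ where $\alpha(U,T)=|\{(u,t)\in U\times T\colon t<u\}|$, and then split the counting set according to the partition classes of $u$ and $t$.

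First I would partition the ``transposition pairs'' by which blocks $u$ and $t$ belong to. Namely, writing $U_i=U\cap N_i$ and $T_j=T\cap N_j$, we have the disjoint decomposition
\[
\{(u,t)\in U\times T\colon t<u\}=\bigsqcup_{i,j\in[d]}\{(u,t)\in U_i\times T_j\colon t<u\}.
\]
For the diagonal blocks $i=j$, by definition of the order on $N_i$ the contribution is exactly $\alpha(U_i,T_i)$. For off-diagonal blocks the key point is the way the total order was extended to $N$: every element of $N_j$ precedes every element of $N_i$ whenever $j<i$. Consequently, if $j<i$ then \emph{every} pair $(u,t)\in U_i\times T_j$ satisfies $t<u$, contributing $u_it_j$, while if $j>i$ the contribution is $0$.

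Summing, we obtain
\[
\alpha(U,T)=\sum_{i=1}^{d}\alpha(U_i,T_i)+\sum_{1\le j<i\le d}u_it_j.
\]
Setting $c:=\sum_{1\le j<i\le d}u_it_j$, which visibly depends only on $u_1,\dots,u_d$ and $t_1,\dots,t_d$, and exponentiating gives
\[
\sgn(U,T)=(-1)^{\alpha(U,T)}=(-1)^c\prod_{i=1}^{d}(-1)^{\alpha(U_i,T_i)}=(-1)^c\sgn(U_1,T_1)\cdots\sgn(U_d,T_d),
\]
as required. There is no real obstacle here; the only subtlety is recognizing that the block-wise structure of the order on $N$ makes the off-diagonal contributions constant (depending only on sizes), which is precisely what allows the ``constant'' $(-1)^c$ to be factored out.
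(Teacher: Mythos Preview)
Your proof is correct and follows essentially the same idea as the paper: both exploit that $\sgn(U,T)=(-1)^{\alpha(U,T)}$ together with the block structure of the order on $N$. The paper phrases it as rearranging the block sequence $(U_1,\dots,U_d,T_1,\dots,T_d)$ into $(U_1,T_1,\dots,U_d,T_d)$ by block swaps whose number depends only on the block sizes, whereas you count inversions directly and obtain the explicit value $c=\sum_{j<i}u_it_j$; these are the same computation viewed from two angles.
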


\begin{proof}
  The value $\sgn(U,T)$ is $-1$ to the number of transpositions in the permutation
  $\pi$ of
  $U \cup T$ where we first place the elements of $U$ (in our given order on
  $N$) and then the elements of $T$ (in the same order). Considering that for $i < j$,
  $U_i$ precedes $U_j$ and $T_i$ precedes $T_j$, the order of the blocks $U_1,
  \dots, U_d, T_1, \dots, T_d$ in $\pi$ is 
  \[
    (U_1, \dots, U_d, T_1, \dots, T_d).
  \]
  After $c$ transpositions where $c$ depends only on $u_1,
  \dots, u_d, t_1, \dots, t_d$, we get a permutation $\pi'$ with the following
  order of blocks
  \[
    (U_1, T_1, U_2, T_2, \dots, U_d, T_d).
  \]
By the above, the sign of $\pi'$ equals $(-1)^c \sgn(U,T)$. On the other
  hand, as $T_i$ precedes $U_j$ for $i < j$ in our order on $N$, the sign of $\pi'$ is also equal the product $\sgn(U_1,T_1) \cdots \sgn(U_d,T_d)$.
  Equating these two expressions gives the desired identity.
\end{proof}

In the following proposition, the $f_i$ are not necessarily coming from a
colorful generic basis. However, we intend to apply it in this setting. With a
slight abuse of notation, we use $\bigwedge$ both for the exterior algebra as
well as for the wedge product of multiple elements. (This can be easily
distinguished from the context.) 

\begin{proposition}
\label{p:lip_colorful}
Suppose that $s_1, \dots, s_d$ and $t_1, \dots, t_d$ are nonnegative integers with $t_i \leq s_i
  \leq n_i$ for every $i \in [d]$. Suppose further that $f_i \in \bigwedge^{t_i} V_i$ and $h_i\in \bigwedge^{s_i} V_i$ for all $i\in [d]$. Then  
   $$\left (\bigwedge_{i=1}^d f_i \right ) \lip \left (\bigwedge_{i=1}^d h_i
    \right ) =
  \pm \bigwedge_{i=1}^d (f_i\lip h_i).$$
\end{proposition}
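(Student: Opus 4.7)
The plan is to reduce by multilinearity to the case of standard basis elements, compute both sides directly via Lemma~\ref{l:lip_fTfS}, and reconcile the signs via Lemma~\ref{l:signs}.

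Both sides of the identity are multilinear in $(f_1, h_1, \dots, f_d, h_d)$ by bilinearity of $\wedge$ and $\lip$. It therefore suffices to verify that for every choice of $T_i \in \binom{N_i}{t_i}$ and $S_i \in \binom{N_i}{s_i}$ one has
$$\left(\bigwedge_{i=1}^d e_{T_i}\right) \lip \left(\bigwedge_{i=1}^d e_{S_i}\right) \;=\; (-1)^{c}\, \bigwedge_{i=1}^d (e_{T_i}\lip e_{S_i}),$$
with $c$ depending only on the tuples $(t_i)$ and $(s_i)$; multilinearity then upgrades this to the stated identity with $\pm = (-1)^c$.

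Because the global order on $N$ places all of $N_i$ before $N_j$ whenever $i<j$, the sets $T := \bigsqcup_i T_i$, $S := \bigsqcup_i S_i$, and $\bigsqcup_i(S_i\setminus T_i)$ already appear in global order; iterating~\eqref{e:sign_wedge_new} thus gives $e_{T_1}\wedge\cdots\wedge e_{T_d} = e_{T}$, $e_{S_1}\wedge\cdots\wedge e_{S_d} = e_{S}$, and $\bigwedge_i e_{S_i\setminus T_i} = e_{S\setminus T}$ with no sign corrections. By Lemma~\ref{l:lip_fTfS} both sides of the target basis-level identity vanish unless $T_i \subseteq S_i$ for every $i$, equivalently $T \subseteq S$; and when these containments hold, a direct computation yields
$$\mathrm{LHS} = e_T \lip e_S = \sgn(S\setminus T,\, T)\, e_{S\setminus T}, \qquad \mathrm{RHS} = \Bigl(\prod_{i=1}^d \sgn(S_i \setminus T_i,\, T_i)\Bigr)\, e_{S\setminus T}.$$

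Finally, Lemma~\ref{l:signs} applied with $U_i = S_i \setminus T_i$ gives $\sgn(S\setminus T,\, T) = (-1)^c \prod_i \sgn(S_i \setminus T_i,\, T_i)$ with $c$ depending only on $|U_i| = s_i - t_i$ and $|T_i| = t_i$. Hence $\mathrm{LHS} = (-1)^c\, \mathrm{RHS}$, exactly as required. The only subtle step in the proof is the sign bookkeeping, which is fully encapsulated by Lemma~\ref{l:signs}, so no substantive obstacle remains.
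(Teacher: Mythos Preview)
Your proof is correct and follows essentially the same approach as the paper's own proof: reduce by bilinearity of $\wedge$ and $\lip$ to standard basis elements $e_{T_i}$, $e_{S_i}$, evaluate both sides via Lemma~\ref{l:lip_fTfS}, and invoke Lemma~\ref{l:signs} (with $U_i = S_i\setminus T_i$) to match the signs with a uniform factor $(-1)^c$ depending only on the $s_i$ and $t_i$. Your write-up is slightly more explicit than the paper's in noting that the block ordering on $N$ makes $\bigwedge_i e_{T_i}=e_T$, $\bigwedge_i e_{S_i}=e_S$, and $\bigwedge_i e_{S_i\setminus T_i}=e_{S\setminus T}$ without sign corrections, but the substance is identical.
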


\begin{proof}
  %Let $u_i := s_i - t_i$ for $i \in [d]$. 
  We will show that 
  \begin{equation}
    \label{e:distribute_lip}
    \left (\bigwedge_{i=1}^d f_i \right ) \lip \left (\bigwedge_{i=1}^d h_i
    \right ) =
  (-1)^c \bigwedge_{i=1}^d (f_i\lip h_i)\end{equation}
where $c$ comes from Lemma~\ref{l:signs}; in particular, it depends only on
  $t_1, \dots, t_d$ and $s_1, \dots, s_d$.

  By bilinearity of~$\lip$ and $\wedge$ it is sufficient to
  prove~\eqref{e:distribute_lip} in the case when the $f_i$ and the  $h_i$ are basis elements of
  $\bigwedge^{t_i} V_i$ and $\bigwedge^{s_i} V_i$ respectively.
  So, assume for each $i\in [d]$ that $f_i = e_{T_i}$ and $h_i = e_{S_i}$ where $T_i \in
  \binom{N_i}{t_i}$ and $S_i \in \binom{N_i}{s_i}$, and let
  $T := T_1 \cup \dots \cup T_d$ and $S := S_1 \cup \dots \cup S_d$. 
  Then $\bigwedge_{i=1}^d f_i = e_T$ and $\bigwedge_{i=1}^d h_i = e_S$ by the
  definition of the exterior product $\wedge$. If $T_i \not\subseteq S_i$ for some $i
  \in [d]$, then $T \not \subseteq S$ and both sides
  of~\eqref{e:distribute_lip} vanish by Lemma~\ref{l:lip_fTfS}.
  Therefore, it remains to check the case that $T_i \subseteq S_i$ for every $i
  \in [d]$. Here by Lemma~\ref{l:signs} (with $U=S\setminus T$) and Lemma~\ref{l:lip_fTfS} we get
  \begin{align*}
    e_T \lip e_S &= \sgn(S\setminus T,T) e_{S\setminus T} \\
     &= 
    (-1)^c \sgn(S_1\setminus T_1, T_1) \cdots \sgn(S_d \setminus T_d, T_d)
    e_{S_1 \setminus T_1} \wedge \cdots \wedge e_{S_d \setminus T_d} \\
    &= 
    (-1)^c (e_{T_1} \lip e_{S_1}) \wedge \cdots \wedge (e_{T_d} \lip e_{S_d}),
  \end{align*}
as required.
\end{proof}

\section{Theorem~\ref{t:mwsat}: the lower bound}\label{sec:lower}
In this section we prove the lower bound in Theorem~\ref{t:mwsat}. Our proof
follows a strategy similar to~\cite{BBMR} and~\cite{Kal85}. Viewing the edges
of $K^q_{\nvec}$ as elements of the exterior algebra of $\R^{N}$, we will
define a linear mapping closely related to the weak saturation process and
lower-bound $\mwsat (K^q_{\nvec}, K^q_{\rvec})$ by the rank of the
corresponding matrix.

As outlined in Section~\ref{sec:algebra}, let $V$ be an $n$-dimensional real vector space with a basis $(e_v)_{v\in N}$, equipped with a standard inner product $\langle \cdot, \cdot \rangle$ with respect to this basis, that is,  $(e_v)_{v\in N}$ is orthonormal.  
Using the exterior product notation of Section~\ref{sec:algebra}, define
$$\SPAN K^q_{\nvec} := \SPAN
\{e_T \colon T \in E(K^q_{\nvec})\} \subseteq
\bigwedge\nolimits^q V.$$
\noindent
For an element $m\in \bigwedge^k V$ the
\emph{support of $m$} is the set
\begin{equation*}
  %\label{e:supp}
  \supp(m)=\left \{S\in \binom{N}{k} \colon \langle e_S, m \rangle \neq 0 \right \}.
\end{equation*}
The following lemma, which converts the problem at hand into a constructive question in linear algebra, is analogous to Lemma 3 in~\cite{BBMR}.\footnote{Put equivalently in the language of~\cite{BBMR}, we map each edge of $K^q_n$ to vector in a certain vector space $\tilde{W}$, so that for each copy of $K^q_{\rvec}$ in $K^q_n$ the underlying vectors are linearly dependent with all coefficients involved being non-zero. This implies $\mwsat(K^q_n, K^q_{\rvec})\geq \dim \tilde{W}$. }
\begin{lemma}
  \label{l:rlboundd}
  Let $Y$ be a real vector space and~$\Gamma: \SPAN K^q_{\nvec} \rightarrow Y$ a linear map such that for every
  subset $R \subseteq N$ with $|R \cap
  N_i|=r_i$ for all $i\in [d]$ there exists an element $m\in \ker \Gamma$ with
  $\supp(m) = E(K^q_{\nvec}[R])$. Then $$\mwsat(K^q_{\nvec}, K^q_{\rvec})
  \geq \rank \Gamma.$$
\end{lemma}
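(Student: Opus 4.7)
The plan is to take a weakly $K^q_{\rvec}$-saturated subgraph $G \subseteq K^q_{\nvec}$ realizing $\mwsat(K^q_{\nvec}, K^q_{\rvec})$, together with some $K^q_{\rvec}$-saturating sequence $e_1, \dots, e_k$, and show that the restriction of $\Gamma$ to the subspace
\[
W := \SPAN\{e_T : T \in E(G)\} \subseteq \SPAN K^q_{\nvec}
\]
has the same image as $\Gamma$ itself. Since $\dim W \leq |E(G)| = \mwsat(K^q_{\nvec}, K^q_{\rvec})$, this immediately gives $\rank \Gamma \leq \mwsat(K^q_{\nvec}, K^q_{\rvec})$.

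To prove $\Gamma(\SPAN K^q_{\nvec}) = \Gamma(W)$, it suffices to show, by induction on $j = 0, 1, \dots, k$, that $e_{T} \in W + \ker \Gamma$ for every $T \in E(G) \cup \{e_1, \dots, e_j\}$. The base $j = 0$ is immediate from the definition of $W$. For the inductive step, write $T_j := e_j$ and use the saturating sequence to pick a copy $H_j \subseteq G \cup \{e_1, \dots, e_j\}$ of $K^q_{\rvec}$ with $T_j \in E(H_j)$ and $|V(H_j) \cap N_i| = r_i$ for all $i$. The hypothesis on $\Gamma$ supplies an element $m_j \in \ker \Gamma$ with $\supp(m_j) = E(H_j)$. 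In particular $\langle e_{T_j}, m_j\rangle \neq 0$, so expanding $m_j$ in the orthonormal basis $(e_T)_{T \in E(H_j)}$ and solving for $e_{T_j}$ yields
\[
e_{T_j} = \frac{1}{\langle e_{T_j}, m_j\rangle}\Big( m_j - \sum_{T \in E(H_j)\setminus\{T_j\}} \langle e_T, m_j\rangle\, e_T \Big).
\]
Every edge $T \in E(H_j)\setminus\{T_j\}$ lies in $E(G) \cup \{e_1, \dots, e_{j-1}\}$, so by the inductive hypothesis $e_T \in W + \ker \Gamma$. As $m_j \in \ker \Gamma$, it follows that $e_{T_j} \in W + \ker \Gamma$ as well, closing the induction.

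Applying $\Gamma$ to the inclusion $\SPAN K^q_{\nvec} \subseteq W + \ker \Gamma$ gives $\Gamma(\SPAN K^q_{\nvec}) = \Gamma(W)$, and hence
\[
\rank \Gamma = \dim \Gamma(\SPAN K^q_{\nvec}) = \dim \Gamma(W) \leq \dim W \leq |E(G)| = \mwsat(K^q_{\nvec}, K^q_{\rvec}).
\]

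There is no genuine obstacle here; the lemma is a codification of the standard algebraic-shadow paradigm for weak saturation (due to Kalai, Alon and Frankl), and the only mild subtlety is matching the directed-partite requirement $|V(H_j) \cap N_i| = r_i$ in the definition of a $K^q_{\rvec}$-saturating sequence with the same requirement on the subsets $R$ in the hypothesis on $\ker \Gamma$, which is precisely what guarantees that the required kernel element $m_j$ exists at each step.
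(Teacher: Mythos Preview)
Your proof is correct and follows essentially the same approach as the paper's. The only cosmetic difference is that you track the subspace $W + \ker\Gamma$ in the domain and show each $e_{T_j}$ lies in it, whereas the paper tracks the image spans $Y_i = \SPAN\{\Gamma(e_T) : T \in E(G_i)\}$ and shows $Y_i = Y_{i-1}$; these are equivalent formulations of the same inductive step.
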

\begin{proof}
  Suppose the $q$-graph $G_0$ is weakly $K^q_{\rvec}$-saturated in
  $K^q_{\nvec}$ and $|E(G_0)|=\mwsat(K^q_{\nvec}, K^q_{\rvec})$. Denote by
  $\{L_1,\dots ,L_k\}$ a corresponding saturating sequence and by $H_i$ a new copy of $K^q_{\rvec}$ that appears in $G_i = G_0 \cup
  \{L_1,\dots ,L_i\}$ with $L_i \in E(H_i)$. Let $Y_i = \SPAN \{\Gamma(e_T)
  \colon T\in E(G_i) \}$, and note that $Y_k=\Gamma(\SPAN K^q_{\nvec})$.
  By assumption, for
  each $i=1,\dots, k$ there exist non-zero coefficients $\{c_T: T\in E(H_i)\}$ such
  that  $\sum_{T\in E(H_i)} c_T \Gamma(e_T) = 0$. Therefore, $$\Gamma (e_{L_i}) =
  -\frac{1}{c_{L_i}} \sum_{T\in E(H_i)\setminus L_i} c_{T}\Gamma(e_T) \in Y_{i-1}.$$ We conclude
  that $Y_i = Y_{i-1}$.  By repeating this procedure we obtain 
  $$\mwsat(K^q_{\nvec}, K^q_{\rvec}) = |E(G_0)| \geq \dim Y_0 = \dim Y_k =
  \rank \Gamma.$$
\end{proof}
\noindent
Our goal now is to define a linear map $\Gamma$ as in Lemma~\ref{l:rlboundd}. For this
purpose let us fix an orthonormal colorful generic basis $(f_v)_{v\in N}$ of $V$
with respect to the partition of $N$, as described in Section~\ref{subsec:col}. Next, for each $i\in [d]$ choose
a set $J_i\subseteq N_i$ with $|J_i| = r_i - 1$ and a vertex $w_i\in
N_i\setminus J_i$. Put
$J:=\bigcup_{i\in [d]} J_i$ and $W := \{w_i
\colon i\in [d]\}$. Finally, set $s:=d-q$ and
\begin{equation}\label{e:defgs}
g :=
\sum_{T\in \binom{W}{s}} f_T.
\end{equation}
We can now state the following auxiliary lemma.
  \begin{lemma}
    \label{l:gsfZ}
    Let $z$ be an integer with $d \geq z \geq s$ and let $Z
    \in \binom{N}z$. Then
    \begin{enumerate}[(i)]
      \item $g \lip f_Z = 0$ if $|Z \cap W| < s$.
      \item If $z = s$, then $\langle g , f_Z \rangle = \begin{cases} \pm 1
      & \hbox{ if } Z \subseteq W, \\ 0 & \hbox{ if } Z \not\subseteq W.\end{cases}$
    \end{enumerate}
  \end{lemma}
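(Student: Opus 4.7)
The plan is to expand both $g \lip f_Z$ and $\langle g, f_Z\rangle$ using the definition $g = \sum_{T\in \binom{W}{s}} f_T$ together with the bilinearity of $\lip$ and of $\langle \cdot, \cdot \rangle$, and then invoke the computations already done for basis elements in Lemma~\ref{l:lip_fTfS} and the orthonormality of $(f_S)_{S\subseteq N}$ established in Section~\ref{subsec:exterior}. Neither half of the statement requires anything beyond these directly.

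For part (i), bilinearity of the left interior product gives
\[
g \lip f_Z = \sum_{T \in \binom{W}{s}} f_T \lip f_Z.
\]
By Lemma~\ref{l:lip_fTfS}, the summand $f_T \lip f_Z$ equals $\sgn(Z\setminus T, T)\, f_{Z\setminus T}$ whenever $T \subseteq Z$ and vanishes otherwise. Since the summation is taken over $T \subseteq W$, a non-vanishing term requires $T \subseteq Z\cap W$; but such a $T$ with $|T|=s$ does not exist when $|Z\cap W|<s$. Hence every term in the sum is zero, proving (i).

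For part (ii), we have $z = s$, so both $g$ and $f_Z$ lie in $\bigwedge^s V$. Since $(f_v)_{v\in N}$ is orthonormal, Section~\ref{subsec:exterior} tells us that the induced basis $(f_S)_{S \in \binom{N}{s}}$ of $\bigwedge^s V$ is orthonormal as well. Therefore
\[
\langle g, f_Z \rangle = \sum_{T \in \binom{W}{s}} \langle f_T, f_Z\rangle,
\]
and the right-hand side evaluates to $1$ when $Z \in \binom{W}{s}$ (equivalently, when $Z \subseteq W$, given $|Z|=s$) and to $0$ otherwise. This yields the claim (the statement allows $\pm 1$, but in fact we get $+1$). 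There is no real obstacle here; the only point worth double-checking is that the index $T$ in the sum and the set $Z$ refer to the same total order on $N$, so that the expression $f_T = f_Z$ is literal equality once $T=Z$ as sets.
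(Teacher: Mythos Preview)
Your proof is correct. Part (i) is identical to the paper's argument. For part (ii) you take a slightly more direct route: you expand $\langle g, f_Z\rangle$ by bilinearity and invoke the orthonormality of $(f_S)_{S\subseteq N}$ straight away, obtaining the value $+1$ when $Z\subseteq W$. The paper instead reuses the computation from (i), writing $\langle g, f_Z\rangle = \langle f_\emptyset \wedge g, f_Z\rangle = \langle f_\emptyset, g\lip f_Z\rangle$ via the defining property of $\lip$, and then reads off the answer from the expansion $g\lip f_Z = \sum_{W'\in\binom{W\cap Z}{s}} \pm f_{Z\setminus W'}$. Your version is marginally shorter and avoids the interior product for (ii) altogether; the paper's version recycles the formula already derived for (i) and naturally accounts for the $\pm$ appearing in the statement (which, as you note, is in fact always $+1$ with your computation).
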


\begin{proof}
  By~\eqref{e:defgs}, bilinearity of $\lip$, and Lemma~\ref{l:lip_fTfS} we get
  \begin{equation}
    \label{e:gs_fZ}
    g \lip f_Z = 
  \sum_{W'\in \binom{W}{s}}f_{W'}\lip f_Z
  = \sum_{W'\in \binom{W\cap Z}{s}} \pm f_{Z\setminus W'}.
  \end{equation}
The last expression is $0$ if $|Z\cap W|<s$; this shows (i). 

Now, assume that $z = s$. Then 
  \begin{equation}
    \label{e:pro_gsfZ}
  \langle g, f_Z \rangle = \langle f_\emptyset \wedge g, f_Z \rangle =
  \langle f_\emptyset, g \lip f_Z\rangle\stackrel{\eqref{e:gs_fZ}}{=}\sum_{W'\in \binom{W\cap Z}{s}} \pm \langle f_\emptyset, f_{Z\setminus W'}\rangle.
  \end{equation}
If $Z \not\subseteq W$, then $|Z \cap W| < z=s$, so $g \lip f_Z = 0$ from
(i), and thus~\eqref{e:pro_gsfZ} evaluates to $0$. On the other hand, if $Z \subseteq W$, then $\binom {W\cap Z}{s}=\{Z\}$. It follows that 
$$\langle g, f_Z \rangle\stackrel{\eqref{e:pro_gsfZ}}{=}\pm \langle f_{\emptyset},f_{\emptyset} \rangle=\pm 1,$$
yielding (ii).
\end{proof}

\noindent
We define the subspace
\begin{equation}\label{e:defU}
U := \SPAN \{g\lip f_T \colon T\in E(K^d_{\nvec}[N\setminus J]),
|T\cap W|\geq s\},
\end{equation}
and observe first that $U\subseteq \SPAN K^q_{\nvec}$. Indeed, for each $T$ in~\eqref{e:defU} and $W'\in \binom{W}{s}$, we have by Lemma~\ref{l:lip_fTfS} that $f_{W'}\lip f_T=0$ if $W'\not \subseteq T$ and $f_{W'}\lip f_T=\pm f_{T\setminus W'}$ if $W' \subseteq T$. In the latter case note that $T\setminus W'\in E(K^q_{\nvec})$, and the claim follows by bilinearity of $\lip$.

Let $Y$ be the orthogonal complement of $U$ in $\SPAN K^q_{\nvec}$ and let $\Gamma \colon \SPAN
K^q_{\nvec} \rightarrow \SPAN K^q_{\nvec}$ be the orthogonal projection on $Y$.
%orthogonal complement of $U$ in $\SPAN K^q_{\nvec}$.
%, with respect to the inner
%product on $(e_S)_{S\subseteq N}$. 
Our main technical lemma in this paper states that $\Gamma$ satisfies the assumptions of Lemma~\ref{l:rlboundd}.
\begin{lemma}
	\label{l:mpartite_coeff}
	Suppose that $R\subseteq N$ satisfies $|R\cap N_i| = r_i$ for every $i\in [d]$. Then,
	there exists $m \in \ker \Gamma$ such that $\supp(m) = E(K^q_{\nvec}[R])$.
\end{lemma}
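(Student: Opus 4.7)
The plan is to construct $m$ explicitly from $R$, $J$ and $W$. For each $i\in[d]$ set $\phi_i:=f_{J_i}\lip e_{R_i}\in V_i$; by Lemma~\ref{l:fSLeR} applied inside the colorful block $V_i$ (where the basis change is still generic), $\phi_i$ is a linear combination of $(f_v)_{v\in N_i\setminus J_i}$ with every coefficient non-zero. Define $\Phi_R := \phi_1\wedge\cdots\wedge\phi_d$, which by bilinearity of $\wedge$ expands as $\sum_T c_T f_T$ over transversals $T=\{t_1,\dots,t_d\}$ with $t_i\in N_i\setminus J_i$ and all $c_T\neq 0$. Finally set $m:=g\lip\Phi_R$.

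Showing $m\in\ker\Gamma=U$ is quick: by bilinearity of $\lip$, $m=\sum_T c_T(g\lip f_T)$ where $T$ ranges over transversals of $N\setminus J$. Lemma~\ref{l:gsfZ}(i) annihilates those with $|T\cap W|<s$, while the remaining summands lie in $U$ by its definition~\eqref{e:defU}.

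To identify $\supp(m)$, I would first apply Proposition~\ref{p:lip_colorful} with $f_i=f_{J_i}$ and $h_i=e_{R_i}$ to rewrite $\Phi_R=\pm\,f_J\lip e_R$, so that~\eqref{e:double_left} gives $m=\pm(g\wedge f_J)\lip e_R$. For any $e\in\binom{N}{q}$, using~\eqref{e:lip_definition} and the fact that $W_A:=\{w_i:i\in A\}$ is disjoint from $J$ for every $A\in\binom{[d]}{s}$, this yields
\[
\langle e_e,m\rangle \;=\;\pm\sum_{A\in\binom{[d]}{s}}\pm\,\langle e_e\wedge f_{W_A\cup J},\,e_R\rangle.
\]
Expanding $f_{W_A\cup J}$ in the $e$-basis through~\eqref{e:f_via_e_d} shows that each inner product vanishes unless $e\subseteq R$, in which case it equals $\pm\det(A_{W_A\cup J\,|\,R\setminus e})$. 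By the colorful block-diagonal structure of the transition matrix, this determinant factors over $i\in[d]$, and matching block sizes forces $\mathbf{1}_{i\in A}=1-|e\cap N_i|$ for each $i$. This excludes all $e$ with $|e\cap N_i|\geq 2$, while for a transversal $e$ on classes $I_e:=\{i:|e\cap N_i|=1\}$ it uniquely selects $A=[d]\setminus I_e$ (of size $s$); colorful genericity then makes each surviving block determinant non-zero, so the sum has a single non-vanishing contribution. Hence $\langle e_e,m\rangle\neq 0$ iff $e\in E(K^q_{\rvec}[R])$, as required. The main obstacle is precisely this block-diagonal bookkeeping, which simultaneously pins down the unique contributing $A$ and rules out non-transversal profiles.
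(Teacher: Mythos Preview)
Your proof is correct and constructs the same element as the paper: you set $m=g\lip\Phi_R$ and then identify $\Phi_R=\pm f_J\lip e_R$ via Proposition~\ref{p:lip_colorful}, so by~\eqref{e:double_left} your $m$ coincides (up to sign) with the paper's $m=(g\wedge f_J)\lip e_R$; the verification that $m\in U$ is then identical to the paper's. The only substantive difference lies in the support computation. The paper rewrites $\langle e_T,m\rangle=\pm\langle g,f_J\lip e_{R\setminus T}\rangle$, splits $f_J\lip e_{R\setminus T}$ colorwise via a second application of Proposition~\ref{p:lip_colorful}, and finishes with Lemma~\ref{l:gsfZ}(ii). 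You instead expand $g$ term by term and reduce everything to the (non-)vanishing of $\det(A_{W_A\cup J\,|\,R\setminus e})$, exploiting the block-diagonal shape of the transition matrix directly: matching block row and column counts forces $\mathbf 1_{i\in A}=1-|e\cap N_i|$, which simultaneously rules out non-transversal $e$ and pins down the unique contributing $A$ for transversal $e\subseteq R$. Your route is a bit more hands-on with determinants and avoids the second use of Proposition~\ref{p:lip_colorful} and Lemma~\ref{l:gsfZ}(ii); the paper's route stays entirely inside the $\lip$-formalism. Both reach the conclusion with comparable effort.
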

\noindent
Deferring the proof of Lemma~\ref{l:mpartite_coeff}, let us first compute $\rank \Gamma$ and conclude the proof of Theorem~\ref{t:mwsat} assuming Lemma~\ref{l:mpartite_coeff}.

Notice that the sets $T\in
K^d_{\nvec}[N\setminus J]$ with $|T\cap W| \geq s$ are in bijective correspondence with the sets $T\setminus W \in K^p_{\nvec}[N\setminus (J\cup W)]$ with $p\leq q$. Using this bijection,
$$\dim U \stackrel{\eqref{e:defU}}{\leq} |\{T\in K^d_{\nvec}[N\setminus J]\colon |T\cap W| \geq s\}| = \sum_{\substack{I\subseteq [d]\\ |I|\leq q}} \prod_{i\in I} (n_i-r_i).$$ 
Consequently,
\begin{equation}\label{e:rankGamma} 
\rank \Gamma = \dim(\SPAN
K^q_{\nvec}) - \dim U \geq \sum_{I\in
  \binom{[d]}{q}}\prod_{i\in I}n_i - \sum_{\substack{I\subseteq [d] \\ |I|\leq
    q}} \prod_{i\in I}(n_i-r_i).
\end{equation}
\begin{proof}[Proof of Theorem~\ref{t:mwsat}]
  On the one hand, by Lemma~\ref{l:mpartite_coeff} the map $\Gamma$ satisfies the assumptions of Lemma~\ref{l:rlboundd}. Therefore,
  $$\mwsat(K^q_{\nvec},
  K^q_{\rvec}) \geq \rank \Gamma \stackrel{\eqref{e:rankGamma}}{\geq} \sum_{I\in
    \binom{[d]}{q}}\prod_{i\in I}n_i - \sum_{\substack{I\subseteq [d] \\ |I|\leq
      q}} \prod_{i\in I}(n_i-r_i).$$ On the other hand, Lemma~\ref{l:gubound}
  gives the same upper bound.
\end{proof}

\begin{proof}[Proof of Lemma~\ref{l:mpartite_coeff}]
  We claim that $$m = (g\wedge f_J) \lip e_R$$ is the desired
  element.\footnote{Let us briefly sketch the topological idea hidden behind this choice:
  As it can be easily deduced from the computations below, $m$ can be also
  expressed as $\pm g\lip \bigl( (f_{J_1} \lip e_{R_1}) \wedge \cdots \wedge (f_{J_d}
  \lip e_{R_d})\bigr)$. In the terminology of simplicial complexes interpreting
  loosely (i) $e_{R_i}$ as a full simplex on the vertex set $R_i$, (ii)
  $\wedge$  as a join of simplicial complexes and (iii) $\lip$ as an operator taking
  the skeleton of appropriate dimension, we gradually get the following:
  $f_{J_i} \lip e_{R_i}$ corresponds to the $0$-skeleton of the simplex on $R_i$,
  that is, the vertices of $R_i$. Then $(f_{J_1} \lip e_{R_1}) \wedge \cdots
  \wedge (f_{J_d}  \lip e_{R_d})$ corresponds to the join of the sets $R_i$,
  that is, the complete $d$-partite complex on $R_1, \dots, R_d$. Finally,
  applying $g\lip$ to this element takes the skeleton again reducing the
  dimension so that the corresponding hypergraph is the required $K^q_{\nvec}[R]$.}
  Let $R_i:=R\cap N_i$ for each $i\in [d]$. 
  
  First, we verify that $m\in \ker \Gamma =  U$. By
Proposition~\ref{p:lip_colorful}
%  \red{Corollary~\ref{c:lip_colorful}} 
we have $$f_J\lip e_R
= \pm (f_{J_1}\lip e_{R_1})\wedge \cdots \wedge (f_{J_d}\lip e_{R_d}).$$ By
Lemma~\ref{l:fSLeR} we can write each of these terms as \begin{equation}\label{e:lambdas}
f_{J_i}\lip e_{R_i} =
\sum_{v\in N_i\setminus J_i} \lambda_v f_v \ \ \text{with all} \ \ \lambda_v\neq 0.
\end{equation}
Combining this with~\eqref{e:wedge_multilinearity_new} gives
\begin{equation}
	\label{e:fJeR}
	f_J\lip e_R = \sum_{Z\in E(K^d_{\nvec}[N\setminus J])} \pm (\prod_{v\in Z}\lambda_v)
	f_Z.
\end{equation}
Therefore, we get
\begin{equation*}
	m
	= (g\wedge f_J) \lip e_R
	\stackrel{\eqref{e:double_left}}{=} g\lip (f_J \lip e_R)
	\stackrel{\eqref{e:fJeR}}{=} \sum_{Z\in E(K^d_{\nvec}[N\setminus J])} (\prod_{v\in Z}\lambda_v) g \lip f_Z
	= \sum_{\substack{Z\in E(K^d_{\nvec}[N\setminus J]) \\ |Z\cap W|\geq s}}
	(\prod_{v\in Z}\lambda_v) g\lip f_Z,
\end{equation*}
where the last equality follows by Lemma~\ref{l:gsfZ}(i) with $z = d$. Thus $m\in U$ as wanted.

  Next, we show that $\supp(m) = E(K^q_{\nvec}[R])$. As we just have shown, $m\in U \subseteq \SPAN K^q_{\nvec}$, i.e. $\supp(m) \subseteq E(K^q_{\nvec})$. Now, for $T \in E(K^q_{\nvec})$ we have
  \begin{equation}
    \label{e:eT_m}
    \langle e_T, m \rangle
    \stackrel{\eqref{e:lip_definition}}{=} \langle e_T\wedge( g\wedge f_{J}) , e_R \rangle
    = \pm  \langle (g\wedge f_J) \wedge e_T , e_R \rangle
   \stackrel{\eqref{e:lip_definition}}{=} 
   \pm  \langle g\wedge f_J , e_T \lip e_R \rangle.
  \end{equation}
%where the first and the last equality follows from the definition of the left
%interior product~\eqref{e:lip_definition} while the middle one
%is a consequence of the alternating property of the product operation $\wedge$. 
If $T\notin
  E(K^q_{\nvec}[R])$, then $T\nsubseteq R$ and by Lemma~\ref{l:lip_fTfS} we have  $e_T\lip e_R = 0$, and consequently $\langle e_T, m \rangle = 0$. Hence, $T\notin \supp(m)$.

  Now assume that $T\in E(K^q_{\nvec}[R])$, i.e., $T\subseteq R$. By~\eqref{e:eT_m} and Lemma~\ref{l:lip_fTfS} we have 
\begin{equation}\label{e:etmgood}
\langle e_T, m \rangle {=} \pm  \langle g\wedge f_J , e_{R\setminus T} \rangle
\stackrel{\eqref{e:lip_definition}}{=} \pm \langle g, f_J \lip e_{R\setminus T} \rangle.
\end{equation}
Let $P := \{i\in [d]\colon T\cap N_i \neq \emptyset\}$ and
$P':=[d]\setminus P$. Using this notation we can write
$$e_{R\setminus T}
= \pm \left (\bigwedge_{i\in P}e_{R_i \setminus \tau_i} \right ) \wedge
 \left (\bigwedge_{i\in P'}e_{R_i} \right ),$$ where for each $i\in P$ the set $\tau_i = T\cap N_i$ contains a single vertex. 
  Applying Proposition~\ref{p:lip_colorful}, 
%  \red{Corollary~\ref{c:lip_colorful}} 
we
 deduce 
\begin{equation}\label{e:fjert}
f_J\lip e_{R\setminus T}
=\pm \left (\bigwedge_{i\in P} f_{J_i}\lip e_{R_i\setminus \tau_i} \right ) \wedge
\left (\bigwedge_{i\in P'} f_{J_i} \lip e_{R_i}\right ).
\end{equation} 
\noindent
  Since $|J_i|=r_i-1=|R_i\setminus \tau_i|$, by
  Lemma~\ref{l:lip_existence_uniqueness} for every $i \in P$ we have $f_{J_i}\lip e_{R_i\setminus \tau_i} \in \bigwedge^0 V$. Thus 
  $$f_{J_i}\lip e_{R_i\setminus \tau_i} = \langle e_\emptyset, f_{J_i}\lip
  e_{R_i\setminus \tau_i}\rangle e_\emptyset = \langle e_\emptyset \wedge f_{J_i},
  e_{R_i\setminus \tau_i}\rangle e_\emptyset = \langle f_{J_i},
  e_{R_i\setminus \tau_i}\rangle e_\emptyset,$$
  and notice that $\langle f_{J_i},
  e_{R_i\setminus \tau_i}\rangle\neq 0$ because $(f_v)_{v\in
    N_i}$ is generic with respect to $(e_v)_{v\in N_i}$.
Plugging it into~\eqref{e:fjert} yields
  \begin{equation}\label{e:gdfJeT}
    %\langle g\wedge f_J , e_T \lip e_R \rangle
    %= 
   f_J \lip e_{R\setminus T} 
       =\pm \left (\bigwedge_{i\in P} \langle f_{J_i},
       e_{R_i\setminus \tau_i}\rangle e_\emptyset \right ) \wedge
   \left (\bigwedge_{i\in P'} f_{J_i} \lip e_{R_i}\right )
    =\pm \left (\prod_{i\in P} \langle f_{J_i}, e_{R_i\setminus \tau_i} \rangle \right ) \bigwedge_{i\in P'} f_{J_i} \lip e_{R_i}.
  \end{equation}

%For $i \in P'$ Lemma~\ref{l:fSLeR} gives $f_{J_i} \lip e_{R_i} = \sum_{v\in N_i\setminus
%J_i}\lambda_v f_v$ with all the coefficients $\lambda_v$ being non-zero.
\noindent
Turning to $P'$, denote $N' := \bigcup_{i\in
    P'}N_i\setminus J_i$. We have
  \begin{equation}
    \label{e:fJieRi}
    \bigwedge_{i\in P'} f_{J_i} \lip e_{R_i}
    \stackrel{\eqref{e:lambdas}}{=}\bigwedge_{i\in P'} \left (\sum_{v\in N_i\setminus
      J_i}\lambda_v f_v \right )
   \stackrel{\eqref{e:wedge_multilinearity_new}}{=} \sum_{Z\in
    E(K^{s}_{\nvec}[N'])} \pm \left (\prod_{v\in Z} \lambda_v \right ) f_Z.
\end{equation}
 Therefore,
  \begin{equation}
    \label{e:final}
    \langle g, \bigwedge_{i\in P'} f_{J_i} \lip e_{R_i} \rangle
    = \sum_{Z\in E(K^{s}_{\nvec}[N'])} \pm (\prod_{v\in Z} \lambda_v) \langle
    g, f_Z \rangle = \pm \prod_{v\in W \cap N'} \lambda_v,
\end{equation}
where the second equality is due to Lemma~\ref{l:gsfZ}(ii), using that there is exactly one $Z \in E(K^{s}_{\nvec}[N'])$ with $Z \subseteq
W$, namely $Z = W \cap N'$.
Putting it all together,
\begin{align*}
  \langle e_T, m \rangle &\stackrel{\eqref{e:etmgood}}{=} \pm  \langle g, f_J \lip e_{R\setminus T} \rangle
  \stackrel{\eqref{e:gdfJeT}}{=} \pm (\prod_{i\in P} \langle f_{J_i}, e_{R_i\setminus \tau_i} \rangle) \langle g, \bigwedge_{i\in P'} f_{J_i} \lip e_{R_i} \rangle\\
  &\stackrel{\eqref{e:final}}{=} \pm (\prod_{i\in P} \langle f_{J_i}, e_{R_i\setminus \tau_i} \rangle ) \prod_{v\in W \cap N'} \lambda_v \neq 0,
\end{align*}
and consequently $T\in \supp(m)$.
\end{proof}

\section{Weak saturation in the clique}\label{sec:clique}

In this section we prove Theorem~\ref{t:tensor}. Let $H$ be a $q$-graph where
$q \geq 2$ without isolated vertices. We recall the notion of a \emph{link
hypergraph} of a vertex $v\in V(H)$: it is the $(q-1)$-graph (possibly with
isolated vertices) defined via 
$$L_H(v):=\{e\setminus \{v\}\colon e\in E(H), v\in e\}.
$$ 	
The \emph{co-degree} of a set $W$ of $q-1$ vertices in $H$ is
$$d_H(W):=|\{e\in E(H): W\subset e\}|.
$$
Define the \emph{minimum positive co-degree} of $H$, in notation $\delta^*(H)$, as
$$\delta^*(H):=\min\big\{d_H(W)\colon W\in \binom{V(H)}{q-1},d_H(W)>0 \big\}.$$
Notice that $\delta^*(H)\leq \delta^*(L_H(v))$ for all $v\in V(H)$, and equality holds for some $v$.  
\begin{lemma}\label{l:co-degree}
	$\wsat(n,H)\leq (\delta^*(H)-1)\binom{n}{q-1}+O_H(n^{q-2}).
	$
\end{lemma}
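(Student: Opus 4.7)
My plan is to prove the bound by exhibiting an explicit weakly $H$-saturated subgraph $G\subseteq K_n^q$ whose edge count meets the target. Let $\delta:=\delta^*(H)$, and pick a vertex $v\in V(H)$ attaining $\delta^*(L_H(v))=\delta$ (guaranteed by the remark preceding the lemma). Write $L:=L_H(v)$. The construction is the natural one: fix a ``pivot'' set $R\subseteq [n]$ with $|R|=\delta-1$ and take
\[
G \;:=\; \{e\in \tbinom{[n]}{q}\colon e\cap R\neq \emptyset\}\;\cup\; G_{\mathrm{seed}},
\]
where $G_{\mathrm{seed}}$ is a small seed supported on a fixed $O_H(1)$-element vertex set that enables the saturating sequence to get started. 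The edge count drops out at once by repeated application of Pascal's identity,
\[
\bigl|\{e\colon e\cap R\neq \emptyset\}\bigr| \;=\; \binom{n}{q}-\binom{n-\delta+1}{q} \;=\; \sum_{i=1}^{\delta-1}\binom{n-i}{q-1} \;=\; (\delta-1)\binom{n}{q-1}+O_H(n^{q-2}),
\]
together with $|G_{\mathrm{seed}}|=O_H(1)$.

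The heart of the proof is verifying weak $H$-saturation of $G$. My plan is to induct on $n$. Assuming the conclusion for $n-1$, I first execute the saturating sequence for the restriction of $G$ to $[n-1]$, so that every $q$-subset of $[n-1]$ becomes an edge; it then remains to process the missing edges through the new vertex $n$, all of which avoid $R$. For each such edge $e=\{n\}\cup W$, the aim is to exhibit a new copy of $H$ containing $e$; the natural candidate sends $v\mapsto n$, maps a suitably chosen edge $\ell\in L$ to $W$, and places the remaining $|V(H)|-q$ vertices of $H$ into $[n-1]$, with preference for $R$. Edges of $H$ avoiding $v$ then fall inside $K_{[n-1]}^q$ and are already present, while edges $\{v\}\cup\ell'$ for $\ell'\in L\setminus\{\ell\}$ map to $\{n\}\cup \phi(\ell')$ and belong to $G$ whenever $\phi(\ell')$ meets $R$.

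The main obstacle, which I expect to absorb most of the work, is the regime $|V(H)|-q>\delta-1$, where $R$ is too small to host all the ``extra'' vertices of an embedded copy of $H$ simultaneously. In that case some image $\{n\}\cup\phi(\ell')$ may be disjoint from $R$ and thus must have been inserted earlier in the sequence. The hypothesis $\delta^*(L_H(v))=\delta$ is exactly what lets one navigate around this: every $(q-2)$-subset of $V(L)$ that lies in some edge of $L$ lies in at least $\delta$ edges, giving enough flexibility to reroute the embedding through alternative link-extensions whose corresponding edges through $n$ have already been processed. Concretely, I plan to order the missing edges through $n$ in a manner compatible with a weakly $L$-saturated ordering of a suitable $(q-1)$-graph on $[n-1]\setminus R$ (a recursion on the link structure), and to verify by a short exchange argument that the desired embedding $\phi$ can always be completed. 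Carrying out this bookkeeping, together with an appropriate choice of $G_{\mathrm{seed}}$ to initialize the induction, is the technical core of the proof.
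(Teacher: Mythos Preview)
Your edge count is correct and the outline is natural, but the construction itself fails for some $H$, so the gap is not merely in the bookkeeping. Here is a concrete obstruction. Take $q=3$ and let $H$ be the $3$-graph on $\{v,w,a_1,\dots,a_6\}$ with edge set $\{\{v,a_i,a_{i+1}\},\{w,a_i,a_{i+1}\}:i\in\Z/6\Z\}$. One checks directly that $\delta^*(H)=2$, and every vertex link is a $4$-cycle or a $6$-cycle (so in particular $\delta^*(L_H(v))=2$, and $v$ is a legitimate choice). Your pivot set is then a singleton, say $R=\{1\}$. After the inductive step has filled in $K_{[n-1]}^3$, the edges through $n$ that are present in $G$ are exactly the triples $\{n\}\cup W$ with $1\in W$: the seed lives on a fixed $O_H(1)$-set, so for large $n$ it contributes nothing at $n$. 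Now try to insert the very first missing edge through $n$, say $\{n,2,3\}$. Whichever vertex $u\in V(H)$ you send to $n$, the link $L_H(u)$ is a $4$- or $6$-cycle, and you would need all but one of its edges to land, under $\phi$, on pairs containing $1$. But $1$ is the image of at most one vertex, which has degree $2$ in a cycle, so at most two link-edges can hit $1$. Hence no copy of $H$ containing $\{n,2,3\}$ has its remaining edges present, and no ordering of the missing edges through $n$ can even get started. Your $G$ is simply not weakly $H$-saturated for this $H$.

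The structural reason is this: your induction on $n$ forces the link of each new vertex $n$ to be the seed-free $(q-1)$-graph ``all $(q-1)$-sets meeting $R$'', and your plan tacitly assumes this graph is weakly $L$-saturated. That is false in general (a star is not weakly $C_k$-saturated for $k\geq 4$), so ``a recursion on the link structure'' cannot just be the lemma one uniformity down---it would have to be a strictly stronger statement about this particular $(q-1)$-graph, which does not hold. The paper avoids the trap by never fixing the link: it inducts on $q$, records only the inequality
\[
\wsat(n,H)\ \le\ \wsat(n-1,H)\ +\ \wsat\bigl(n-1,\,L_H(v)\bigr),
\]
places in the link of $n$ an \emph{arbitrary} minimum weakly $L$-saturated $(q-1)$-graph on $[n-1]$ (which may itself carry a seed), and then sums over $n$ using the $(q-1)$-case. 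If you insist on an explicit $G$, each vertex's link must be allowed its own recursively constructed seed; the result is no longer of the form ``edges meeting $R$ plus $O_H(1)$ extras'', and the clean count you wrote down is recovered only after the telescoping.
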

\begin{proof}
We apply induction on $q$. For $q=2$ this is a well-known fact (\cite{Faudree13}, Theorem
4).
Suppose now that $q\geq 3$ and the statement holds for all smaller values. Let $H$ be a $q$-graph and let $W=\{v_1,\dots, v_{q-1}\}$ be a set satisfying $d_H(W)=\delta^*(H)$. Let $H_1=L_H(v_1)$ be the link hypergraph of $v_1$, and observe that $\delta^*(H_1)=\delta^*(H)$. A weakly $H$-saturated $q$-graph on $[n]$ is obtained as follows.  Take a minimum weakly $H_1$-saturated $(q-1)$-graph on $[n-1]$ and insert $n$ into each edge; take a union of the resulting $q$-graph with a minimum weakly $H$-saturated $q$-graph on $[n-1]$. We therefore obtain
$$\wsat(n,H)\leq \wsat(n-1,H)+\wsat(n-1,H_1). 
$$
Iterating and applying the induction hypothesis,
\begin{align*}
\wsat(n,H)&\leq \wsat(|V(H)|,H)+\sum_{m=|V(H)|}^{n-1} \wsat(m,H_1)\\
&\leq (\delta^*(H_1)-1)\sum_{m=q-2}^{n-1}\binom{m}{q-2}+O_{H}(n^{q-2})\\
&=(\delta^*(H)-1)\binom{n}{q-1}+O_H(n^{q-2}).
\end{align*}
\end{proof}
The \emph{tensor product} of two $q$-graphs $G$ and $J$, $G\times
J$ is defined having the vertex set $V(G)\times V(J)$ and the edge set
$$E(G\times J)=\big\{\{(v_1,w_1),\dots (v_q,w_q)\}: \{v_1,\dots, v_q\}\in E(G), \{w_1,\dots , w_q\}\in E(J)\big \}.$$ 
(Note that every pair of edges in the original graphs produces $q!$ edges
in the product.)
%In the below lemma we identify hypergraphs with their edge set.
\begin{lemma}\label{l:tensorpartite}
Let $H=K^{d}_{r_1,\dots,r_d}$, and let $F_n^d$ be the copy of $K^{d}(n;d)$ between the vertex sets $[n]\times\{1\},\dots,[n]\times\{d\}$. Then there exists a $d$-graph $E^d(n,H)\subseteq F_n^d\setminus (K^{d}_{[n]}\times K_{[d]}^{d})$ of size $O_H(n^{d-2})$ such that $$G(n,H):=(K^{d}_{[n]}\times K_{[d]}^{d})\sqcup E^d(n,H)$$ is weakly $H$-saturated in $F_n^d$. 
\end{lemma}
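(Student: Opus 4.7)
My plan is to construct $E^d(n,H)$ via a small \emph{auxiliary set} $T\subseteq[n]$ of constant size $|T|=\max_i(r_i-1)$, and set
\[
E^d(n,H):=\bigl\{e\in F_n^d\setminus(K^d_{[n]}\times K^d_{[d]}): e\cap(T\times[d])\neq\emptyset\bigr\},
\]
the set of missing transversals of $F_n^d$ that use at least one vertex with first coordinate in $T$. For the size bound, fix a position $i_0\in[d]$ and a value $t\in T$; the missing transversals $\{(v_1,1),\dots,(v_d,d)\}$ with $v_{i_0}=t$ are exactly those whose remaining $d-1$ first coordinates feature a coincidence, and there are $O(n^{d-2})$ such completions. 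Summing over $(i_0,t)\in[d]\times T$ yields $|E^d(n,H)|=O_H(n^{d-2})$.

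To prove that $G(n,H)$ is weakly $H$-saturated in $F_n^d$, I will add one at a time --- in arbitrary order --- every missing edge $e$ of $F_n^d$ whose first coordinates all lie in $[n]\setminus T$; the remaining missing edges are already present as seeds in $E^d(n,H)$. For such an $e=\{(v_1,1),\dots,(v_d,d)\}$, let $\pi$ be the partition of $[d]$ with $i\sim_\pi j\Leftrightarrow v_i=v_j$, and let $u_1,\dots,u_k\in[n]\setminus T$ be its distinct first-coordinate values. I construct the copy of $H$ containing $e$ by choosing vertex sets $A_i\subseteq[n]\times\{i\}$ with $|A_i|=r_i$, each formed by the anchor $(v_i,i)$ together with $r_i-1$ \emph{auxiliary} vertices whose first coordinates are drawn from $T$. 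The only requirement is that within each $A_i$ the auxiliary first coordinates are pairwise distinct, which is possible because $|T|\geq r_i-1$ for every $i$.

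The key observation is that every edge $e'$ of this copy is a transversal $(a_1,\dots,a_d)\in A_1\times\cdots\times A_d$ whose first coordinates split into anchor values (in $\{u_1,\dots,u_k\}\subseteq[n]\setminus T$) and auxiliary values (in $T$). A first-coordinate collision in $e'$ can therefore arise only between two anchor positions lying in the same part of $\pi$, or between two auxiliary positions that happened to pick the same $T$-value. Consequently: if $e'$ has no collision then $e'\in K^d_{[n]}\times K^d_{[d]}$; and if $e'$ does have a collision but $e'\neq e$, then $e'$ must use at least one auxiliary vertex (otherwise every position is an anchor and $e'=e$), so $e'$ has a first coordinate in $T$ and lies in $E^d(n,H)$. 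In either case $e'\in G(n,H)$, so $e$ is the unique new edge of a copy of $H$ in $G(n,H)\cup\{e\}$, independent of the order of previous additions.

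The main foreseeable obstacle is verifying the ``collision $\Rightarrow$ uses an auxiliary vertex'' dichotomy uniformly across all non-$e$ transversals of the copy: this relies on the disjointness of the anchor values $u_j$ from $T$ (guaranteed by our restriction to edges $e$ whose first coordinates avoid $T$) and on the fact that collision-free transversals automatically lie in the tensor product $K^d_{[n]}\times K^d_{[d]}$. Beyond these bookkeeping points, no induction, algebraic machinery, or delicate saturating order is required; the argument works in one stroke because the ``bad'' non-tensor transversals outside the single edge $e$ are precisely marked by their intersection with the $O_H(1)$-sized set $T$.
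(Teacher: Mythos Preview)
Your argument is correct. One minor wording slip in the size count: the missing transversals with $v_{i_0}=t$ are those with a coincidence somewhere among all $d$ first coordinates (possibly $v_j=t$ for some $j\neq i_0$), not only among the remaining $d-1$; but the bound you need, $n^{d-1}-(n-1)(n-2)\cdots(n-d+1)=O(n^{d-2})$, is unaffected.

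Your route is genuinely different from, and more elementary than, the paper's. The paper first reduces to the balanced case $H=K^d(r;d)$ and then runs a double induction on $d$ and $n$: it builds $G(n,H)$ from $G(n-1,H)$ by adjoining link-type pieces $E_i$, $E_{i_1,i_2}$, $E_0$ handling edges that meet the new column $\{n\}\times[d]$ in exactly one, exactly two, or at least three positions, and the $E_{i_1,i_2}$ are produced via the upper-bound construction underlying Corollary~\ref{c:unordered}. Your construction is a single direct step, self-contained and independent of the rest of the paper: by fixing a constant-size set $T$ of first coordinates, every missing edge avoiding $T$ is witnessed by a copy of $H$ whose other edges are either collision-free (hence already in $K^d_{[n]}\times K^d_{[d]}$) or touch $T$ (hence in $E^d(n,H)$), so these edges can be added in any order. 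Both approaches give the same $O_H(n^{d-2})$ bound; the paper's inductive scaffolding does not appear to buy anything extra for the purposes of Theorem~\ref{t:tensor}.
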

\begin{proof}
It suffices to prove the above statement when $r_1=\dots =r_d=:r$, i.e. when $H=K^d(r;d)$, as every edge creating a new copy of $K^d(\max\{r_1,\dots,r_d\};d)$ creates in particular a new copy of $K^{d}_{r_1,\dots,r_d}$.

We apply induction on $d$ and $n$. For $d=2$ and any $n\geq |V(H)|$ the graph $K_{[n]}\times K_{[2]}$ misses only a matching from $F_n^2$, making it already $H$-saturated in $F_n^2$, as can be easily checked. Moreover, for every fixed $H$ we can assume the statement to hold for all $n$ less than some large $C(H)$. 

For the induction step, fix $(n,d)$ and suppose that the statement holds for all
$(n',d')$ with $d'<d$ and all $(n'',d)$ with $n''< n$. It suffices to show that
$O_H(n^{d-3})$ edges can be added to $G(n-1,H)$ to satisfy the assertion; these
edges will be as follows. 

For each $i\in [d]$ let the $(d-1)$-graph $E'_i$ be an isomorphic copy of
  $E^{d-1}(n-1,K^{d-1}(r;d-1))$ between the sets $[n-1]\times \{j\}$ for $
  j\in[d]\setminus \{i\}$, such that $(K_{[n-1]}^{d-1}\times K_{[d]\setminus
  \{i\}}^{d-1})\sqcup E_i'$ is weakly $K^{d-1}(r;d-1)$-saturated in the complete $(d-1)$-partite $(d-1)$-graph between the sets $[n-1]\times \{j\}$ for $ j\in[d]\setminus \{i\}$. Let 
$$E_i:=\{e\sqcup \{(n,i)\}: e\in E_i'\}.$$ 
By the induction hypothesis $|E_i|=|E'_i|= O_H(n^{d-3})$.

Similarly, for each $\{i_1,i_2\}\in \binom{[d]}{2}$ apply Corollary~\ref{c:unordered} to obtain a $(d-2)$-graph $E'_{i_1,i_2}$ of size $O_H(n^{d-3})$ which is weakly
  $K^{d-2}(r;d-2)$-saturated in the copy of $K^{d-2}(n-1;d-2)$ between the sets $[n-1]\times
\{j\}$ for  $j\in[d]\setminus \{i_1,i_2\}$
(for $d=3$ take any $r-1$ vertices in $[n-1]\times [d]\setminus \{i_1,i_2\}$). As above, insert $(n,i_1)$ and $(n,i_2)$ into each edge of $E'_{i_1,i_2}$; let the resulting edge set be called $E_{i_1,i_2}$. 

Finally, take all edges of $F^d_n$ containing at least three vertices with $n$ as their first coordinate, and let $E_0$ be this edge set; clearly $|E_0|=O_H(n^{d-3})$ as well. Put
$$G(n,H):=G(n-1,H)\cup \bigcup_{i\in [d]}E_i\cup \bigcup_{\{i_1,i_2\}\in \binom{[d]}{2}}E_{i_1,i_2}\cup E_0,
$$
and 
$$E^d(n,H):=G(n,H)\setminus (K^{d}_{[n]}\times K_{[d]}^{d}).
$$

By the induction hypothesis and the bounds on the $|E_i|$, the $|E_{i_1,i_2}|$ and $|E_0|$, we have $|E^d(n,H)|=O_H(n^{d-2})$. To see that $G(n,H)$ is weakly
$H$-saturated, first note that by induction hypothesis $G(n-1,H)$ is weakly
$H$-saturated in $F^d_{n-1}$, hence the $d$-graph $G(n-1,H)\cup (K^{d}_{[n]}\times K_{[d]}^{d})\subseteq G(n,H)$ is weakly $H$-saturated in
$J_0:=F^d_{n-1}\cup(K^{d}_{[n]}\times K_{[d]}^{d})$.
Furthermore, let 
$$K_1:=\{e\in F_n^d\colon |e\cap (\{n\}\times [d])|=1\},$$
and 
$$K_2:=\{e\in F_n^d\colon |e\cap (\{n\}\times [d])|=2\}.$$
Let $J_1:=J_0\cup K_1$ and $J_2:=J_1\cup K_2$.
By construction, $J_0\cup \bigcup_{i\in [d]}E_i$ is weakly $H$-saturated in
$J_1$, $J_1\cup \bigcup_{\{i_1,i_2\}\in \binom{[d]}{2}}E_{i_1,i_2}$ is weakly
$H$-saturated in $J_2$ and $J_2\cup E_0=F_n^d$. Thus, $G(n,H)$ is weakly
$H$-saturated in $F_n^d$ as desired. This proves the induction step, and the
statement of the lemma follows.
\end{proof}
\begin{proof}[Proof of Theorem~\ref{t:tensor}]
For the first statement, suppose that $G\subseteq K_{n,n}$ is weakly $H$-saturated in $K_{n,n}$. Placing two $|V(H)|$-cliques on the parts of $G$ is easily seen to produce a weakly $H$-saturated graph in $K_{2n}$. Therefore,
\begin{equation}\label{e:biptrick1}
\wsat(2n,H)\leq \wsat(K_{n,n},H)+|V(H)|^2.
\end{equation}
Conversely, suppose that $G=G_0$ is weakly $H$-saturated in $K_{[n]}$ via a saturating sequence $e_1=\{i_1,j_1\},\dots, e_k=\{i_k,j_k\}$. For $1\leq \ell\leq k$ let $G_\ell=G_0\cup \{e_1,\dots e_\ell\}$, and let $H_\ell$ be a copy of $H$ in $G_\ell$ containing $e_\ell$.

Let $G^{bip}=G\times K_{[2]}$, i.e., $V(G^{bip})=[n]\times \{1,2\}$ and
  $$E(G^{bip})=\{\{(i,1),(j,2)\}: \{i,j\}\in E(G)\}.$$
We claim that 
$G^{bip}$ is weakly $H$-saturated in  $K_{[n]}^{bip}=K_{[n]}\times K_{[2]}$ via the $H$-saturating sequence 
\newline
$f_1, f'_1, \dots, f_k,f'_k$, where, for each $\ell \in [k]$, $f_\ell=\{(i_\ell,1),(j_\ell,2)\}$ and
$f'_\ell=\{(i_\ell,2),(j_\ell,1)\}$, and that 
 $G_{\ell-1}^{bip}\cup \{f_\ell,f'_\ell\}=G_{\ell}^{bip}$ for all $\ell\in [k]$ (where $G_{\ell}^{bip}$ is defined analogously, i.e., $G_{\ell}^{bip}=G_{\ell}\times K_{[2]}$). 
Indeed, let $(A,B)$ be a bipartition of $V(H_\ell)$ with $i_\ell\in A$ and
$j_\ell \in B$, and consider the analogous graph $H^b_\ell$ between $A\times
\{1\}$ and $B\times \{2\}$, i.e., for every $(i,j)\in A\times B$ we have
$\{(i,1),(j,2)\}\in E(H^b_\ell)$ if and only if $\{i,j\}\in E(H_\ell)$. Note that
$f_\ell\in E(H^b_\ell)$ is the only edge of $H^b_\ell$ not already present in
$G_{\ell-1}^{bip}$, therefore we can add it to the latter creating a new copy
  of $H$, namely $H^b_\ell$. Symmetrically, taking a graph $H'^b_\ell$ between
  $A\times \{2\}$ and $B\times \{1\}$ allows to add $f'_\ell$. Since
  $G_\ell=G_{\ell-1}\cup e_\ell$, we have $G_{\ell-1}^{bip}\cup \{f_\ell,f'_\ell\}=G_{\ell}^{bip}$.
Finally, note that $G^{bip}\cup \{f_1,\dots,f'_k\}=G_k^{bip}=K_{[n]}^{bip}$.

Note that $K_{[n]}^{bip}$ is isomorphic to $K_{n,n}$ minus a perfect matching,
  and it is a straightforward check that this graph is $H$-saturated in $K_{n,n}$ (we can assume that $|V(H)|\leq n$). We have thus shown 
\begin{equation}\label{e:biptrick2}
\wsat(K_{n,n},H)\leq 2 \wsat(n,H).
\end{equation}
Combining~\eqref{e:biptrick1} and~\eqref{e:biptrick2} gives
$$\frac{\wsat(2n,H)}{2n}-o(1)\leq \frac{\wsat(K_{n,n},H)}{2n}\leq \frac{\wsat(n,H)}{n},
$$
and taking the limit,~\eqref{e:limconst} follows readily.

For the second statement, denote $H=K^{d}_{r_1,\dots,r_d}$ where $1\leq r_1\leq\dots\leq r_d$. Observe that the upper bound in~\eqref{e:hypreduction} holds by Lemma~\ref{l:co-degree}, as $\delta^*(H)=r_1$. To prove the lower bound, suppose $G$ is weakly $H$-saturated in $K^{d}_{[n]}$, and that $|E(G)|=\wsat(n,H)$ . Let $G^{mult}=G\times K_{[d]}^{d}$, that is,
$V(G^{mult})=[n]\times [d]$ and 
$$E(G^{mult})=\{\{(i_1,1),\dots, (i_d,d)\}:\{i_1,\dots, i_d\} \in E(G)\}.
$$

Essentially the same argument as for $G^{bip}$ before shows that $G^{mult}$ is weakly $H$-saturated in $K^{d}_{[n]}\times K_{[d]}^{d}$. By Lemma~\ref{l:tensorpartite} adding further $O_H(n^{d-2})$ edges creates a weakly $H$-saturated $d$-graph in $K^d(n;d)$. Hence,
\begin{equation}\label{eq:multitensor}
\wsat(K^{d}(n;d),H) \leq |E(G^{mult})|+O(n^{d-2}) = d! \wsat(n,H)+O(n^{d-2}).
\end{equation}
On the other hand, Moshkovitz and Shapira~\cite{MoshkShap} proved that $\wsat(K^{d}(n;d),H)=d(r_1-1)n^{d-1}+O(n^{d-2})$. Combining this with~\eqref{eq:multitensor} yields the lower bound in~\eqref{e:hypreduction}. 
\end{proof}

\bibliographystyle{alpha}
\bibliography{wsat}
 
\end{document}